\begin{document}
\baselineskip=14pt

\numberwithin{equation}{section}

\newtheorem{thm}{Theorem}[section]
\newtheorem{lem}[thm]{Lemma}
\newtheorem{cor}[thm]{Corollary}
\newtheorem{Prop}[thm]{Proposition}
\newtheorem{Def}[thm]{Definition}
\newtheorem{Rem}[thm]{Remark}
\newtheorem{Ex}[thm]{Example}

\newcommand{\A}{\mathbb{A}}
\newcommand{\B}{\mathbb{B}}
\newcommand{\C}{\mathbb{C}}
\newcommand{\D}{\mathbb{D}}
\newcommand{\E}{\mathbb{E}}
\newcommand{\F}{\mathbb{F}}
\newcommand{\G}{\mathbb{G}}
\newcommand{\I}{\mathbb{I}}
\newcommand{\J}{\mathbb{J}}
\newcommand{\K}{\mathbb{K}}
\newcommand{\M}{\mathbb{M}}
\newcommand{\N}{\mathbb{N}}
\newcommand{\Q}{\mathbb{Q}}
\newcommand{\R}{\mathbb{R}}
\newcommand{\T}{\mathbb{T}}
\newcommand{\U}{\mathbb{U}}
\newcommand{\V}{\mathbb{V}}
\newcommand{\W}{\mathbb{W}}
\newcommand{\X}{\mathbb{X}}
\newcommand{\Y}{\mathbb{Y}}
\newcommand{\Z}{\mathbb{Z}}
\newcommand\ca{\mathcal{A}}
\newcommand\cb{\mathcal{B}}
\newcommand\cc{\mathcal{C}}
\newcommand\cd{\mathcal{D}}
\newcommand\ce{\mathcal{E}}
\newcommand\cf{\mathcal{F}}
\newcommand\cg{\mathcal{G}}
\newcommand\ch{\mathcal{H}}
\newcommand\ci{\mathcal{I}}
\newcommand\cj{\mathcal{J}}
\newcommand\ck{\mathcal{K}}
\newcommand\cl{\mathcal{L}}
\newcommand\cm{\mathcal{M}}
\newcommand\cn{\mathcal{N}}
\newcommand\co{\mathcal{O}}
\newcommand\cp{\mathcal{P}}
\newcommand\cq{\mathcal{Q}}
\newcommand\rr{\mathcal{R}}
\newcommand\cs{\mathcal{S}}
\newcommand\ct{\mathcal{T}}
\newcommand\cu{\mathcal{U}}
\newcommand\cv{\mathcal{V}}
\newcommand\cw{\mathcal{W}}
\newcommand\cx{\mathcal{X}}
\newcommand\ocd{\overline{\cd}}

\def\c{\centerline}
\def\ov{\overline}
\def\emp {\emptyset}
\def\pa {\partial}
\def\bl{\setminus}
\def\op{\oplus}
\def\sbt{\subset}
\def\un{\underline}
\def\al {\alpha}
\def\bt {\beta}
\def\de {\delta}
\def\Ga {\Gamma}
\def\ga {\gamma}
\def\lm {\lambda}
\def\Lam {\Lambda}
\def\om {\omega}
\def\Om {\Omega}
\def\sa {\sigma}
\def\vr {\varepsilon}
\def\va {\varphi}

\title{\bf\Large A strongly indefinite Choquard equation with critical exponent due to the Hardy-Littlewood-Sobolev inequality\thanks{This work was partially supported by NSFC (11571317) and ZJNSF(LY15A010010).}\vspace{5mm}}

\author{\normalsize Fashun Gao and Minbo Yang\thanks{M. Yang is the corresponding author: mbyang@zjnu.edu.cn}
\vspace{2mm}\\
{\small Department of Mathematics, Zhejiang Normal University} \\ {\small  Jinhua, Zhejiang, 321004, P. R. China}}

\date{}
\maketitle

\begin{abstract}
In this paper we are concerned with the following nonlinear Choquard equation
$$-\Delta u+V(x)u
=\left(\int_{\mathbb{R}^N}\frac{G(y,u)}{|x-y|^{\mu}}dy\right)g(x,u)\hspace{4.14mm}\mbox{in}\hspace{1.14mm} \mathbb{R}^N,
$$
where $N\geq4$, $0<\mu<N$ and $G(x,u)=\displaystyle\int^u_0g(x,s)ds$. If $0$ lies in a gap of the spectrum of $-\Delta +V$ and $g(x,u)$ is of critical growth due to the Hardy-Littlewood-Sobolev inequality, we obtain the existence of nontrivial solutions by variational methods. The main result here extends and complements the earlier theorems obtained in \cite{AC, KS, MS2}.
 \vspace{0.3cm}

\noindent{\bf Mathematics Subject Classifications (2000):}35J20,
35J60

\vspace{0.3cm}

 \noindent {\bf Keywords:}  Choquard equation; Hardy-Littlewood-Sobolev inequality; Critical growth; Strongly indefinite problem.
\end{abstract}

\section{\large Introduction and main results}

In this article we are going to study a nonlocal equation of the form
\begin{equation}\label{CCE1}
\left\{\begin{array}{l}
\displaystyle-\Delta u+V(x)u
=\left(\int_{\mathbb{R}^N}\frac{G(y,u)}{|x-y|^{\mu}}dy\right)g(x,u)\hspace{4.14mm}\mbox{in}\hspace{1.14mm} \R^{N},\\
\displaystyle u\in H^{1}(\R^{N}),
\end{array}
\right.
\end{equation}
where $N\geq4$, $0<\mu<N$ and $G(x,u)=\displaystyle\int^{u}_0g(x,s)ds$. This type of nonlocal equation is closely related to the Choquard-Pekar equation
\begin{equation}\label{Nonlocal.S1}
 -\Delta u +u =\left(\int_{\R^N}\frac{|u(y)|^{p}}{|x-y|^{\mu}}dy\right)|u|^{p-2}u  \quad \mbox{in} \quad \R^N.
\end{equation}
 For $N=3$, $p=2$ and $\mu=1$, the study of equation \eqref{Nonlocal.S1} goes
back to the work \cite{P1} by S. Pekar in 1954, there the author used the equation to describe a polaron at rest in the quantum theory. In 1976, to model an electron trapped
in its own hole, P. Choquard considered equation \eqref{Nonlocal.S1} as a certain approximation to Hartree-Fock theory of one component
plasma,  see \cite{L1}. In some particular cases, equation \eqref{Nonlocal.S1} is also known as the Schr\"{o}dinger-Newton equation which was introduced by R. Penrose in \cite{Pe} to investigate the selfgravitational collapse of a quantum mechanical wave function.

Mathematically, for $N=3$, $p=2$ and $\mu=1$, the existence of ground states of equation \eqref{Nonlocal.S1} was obtained in \cite{L1, Ls} by variational methods. Involving the qualitative properties of the ground stats, the uniqueness was proved in \cite{L1} and the nondegeneracy was established in \cite{Len, WW}. For equation \eqref{Nonlocal.S1} with general $p$ and $\mu$, the regularity, positivity, radial symmetry and decay property of the ground states were proved in \cite{CCS1, ML, MS1}.

To study equation \eqref{CCE1} variationally, we will use the following Hardy-Littlewood-Sobolev inequality frequently, see \cite{LL}.
\begin{Prop}\label{HLS}
 Let $t,r>1$ and $0<\mu<N$ satisfying $1/t+\mu/N+1/r=2$, $f\in L^{t}(\mathbb{R}^N)$ and $h\in L^{r}(\mathbb{R}^N)$. There exists a sharp constant $C(t,N,\mu,r)$, independent of $f,h$, such that
\begin{equation}\label{HLS1}
\int_{\mathbb{R}^{N}}\int_{\mathbb{R}^{N}}\frac{f(x)h(y)}{|x-y|^{\mu}}dxdy\leq C(t,N,\mu,r) |f|_{t}|h|_{r}.
\end{equation}
If $t=r=2N/(2N-\mu)$, then there is equality in \eqref{HLS1} if and only if $f\equiv(const.)h$ and
$$
h(x)=A(\gamma^{2}+|x-a|^{2})^{-(2N-\mu)/2}
$$
for some $A\in \mathbb{C}$, $0\neq\gamma\in\mathbb{R}$ and $a\in \mathbb{R}^{N}$.
\end{Prop}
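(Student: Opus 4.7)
The plan is to prove Proposition 1.1 in two stages: first establish the inequality with some finite (non-sharp) constant via real interpolation, then identify the sharp constant and equality cases in the conformal exponent case via symmetrization and a conformal change of variables. I would begin by reducing the bilinear inequality \eqref{HLS1} to a linear mapping property for the Riesz-type potential $Th(x) := \int_{\R^N} h(y)/|x-y|^\mu \, dy$. By duality, \eqref{HLS1} (for every admissible $f$) is equivalent to the bound $|Th|_{t'} \leq C|h|_r$, where $1/t' = 1/r + \mu/N - 1$ (the scaling forced by $1/t+\mu/N+1/r=2$ combined with $1/t+1/t'=1$).

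To obtain this linear bound I would split the kernel $K(x)=|x|^{-\mu}$ as $K\chi_{\{|x|\le R\}}+K\chi_{\{|x|>R\}}$. The first piece is integrable and contributes via Young's convolution inequality; the second piece provides a pointwise bound on $Th(x)$ in terms of $|h|_r$ through H\"older's inequality. Optimizing $R$ against the level-set value $\lambda$ yields the weak-type estimate $|\{x:|Th(x)|>\lambda\}|\lesssim(|h|_r/\lambda)^{t'}$. Marcinkiewicz interpolation between two such weak-type endpoints (varying $r$ within the open admissible range) then upgrades this to the strong-type bound, establishing \eqref{HLS1} with some finite constant $C(t,N,\mu,r)$.

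For the sharp constant and the equality characterization in the conformal case $t=r=2N/(2N-\mu)$, I would follow Lieb's approach. The Riesz rearrangement inequality gives $\int\!\int f(x)|x-y|^{-\mu}h(y)\,dx\,dy \le \int\!\int f^*(x)|x-y|^{-\mu}h^*(y)\,dx\,dy$, with $|f^*|_t=|f|_t$ and $|h^*|_r=|h|_r$; extremals may therefore be sought among radial symmetric-decreasing functions, and the symmetry of the bilinear form in $f$ and $h$ then reduces matters to the diagonal case $f=ch$. At this exponent the bilinear form is conformally invariant, so stereographic projection $\R^N\to S^N$ transforms the problem into maximizing a conformally invariant functional on the compact sphere, whose maximizers are constants by symmetry and compactness. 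Pulling these back to $\R^N$ produces precisely the family $A(\gamma^2+|x-a|^2)^{-(2N-\mu)/2}$ appearing in the statement. The main difficulty lies not in the inequality itself but in the equality characterization: one must invoke the strict version of the Riesz rearrangement inequality to rule out off-centred extremals before reducing to radial decreasing functions, and then unfold the conformal change of variables carefully. Since this is a classical fact detailed in \cite{LL}, the present paper simply invokes the statement rather than reproducing the argument.
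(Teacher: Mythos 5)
The paper does not prove Proposition~\ref{HLS}; it is stated as a known result and cited directly from Lieb--Loss \cite{LL}, so there is no ``paper's proof'' to compare against. Your outline is a faithful summary of the classical argument that appears there: the non-sharp bound via the weak-type estimate for the Riesz potential (kernel splitting, Young plus H\"older, optimizing the cut-off radius, then Marcinkiewicz interpolation), and the sharp constant with the equality cases in the conformal exponent $t=r=2N/(2N-\mu)$ via the Riesz rearrangement inequality and conformal invariance under stereographic projection, which is Lieb's 1983 argument reproduced in \cite{LL}. You also correctly observe that this is exactly why the paper simply invokes the statement. One small point worth tightening if you were to write this out in full: the duality reduction gives $|Th|_{t'}\le C|h|_r$ only after noting that the adjoint acts symmetrically because the kernel $|x-y|^{-\mu}$ is symmetric, and the weak-type endpoints used in Marcinkiewicz must be chosen strictly inside the admissible range $1<r<N/(N-\mu)$ so that both endpoints are genuine (the endpoint $r=1$ gives only weak-type $(1,N/(N-\mu))$ and cannot be upgraded). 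Neither of these affects the soundness of your sketch.
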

Notice that, by the Hardy-Littlewood-Sobolev inequality and the Sobolev imbedding, the integral
$$
\int_{\mathbb{R}^{N}}\int_{\mathbb{R}^{N}}\frac{|u(x)|^{t}|u(y)|^{t}}{|x-y|^{\mu}}dxdy
$$
is well defined if
$$
\frac{2N-\mu}{N}\leq t\leq\frac{2N-\mu}{N-2}.
$$
In this way we call $\frac{2N-\mu}{N}$ the lower critical exponent and $2_{\mu}^{\ast}=\frac{2N-\mu}{N-2}$ the upper critical exponent. We refer the readers to \cite{ACTY, ANY, CS, GS, MS3} and the references therein for recent progress on the study of the subcritical Choquard equation.
  The critical problem for the Choquard equation is an interesting topic and has attracted a lot of attention recently. The lower critical exponent case was studied in \cite{MS4}, some existence and nonexistence results were established under suitable assumptions on the potential $V(x)$. For the upper critical exponent case, a critical Choquard type equation on a bounded domain of $\R^N$, $N\geq 3$  was investigated in \cite{GY, GY2}, there the authors generalized the well-known results obtained in \cite{ABC, BN}. If the problem was set on the whole plane, a critical Choquard equation in the sense of the Trudinger-Moser inequality was considered in \cite{ACTY}. We need to make a further remark about the critical Choquard equation. Consider the Choquard equation with constant coefficient $\lambda$ and pure critical term
\begin{equation}\label{Nonlocal.S3}
 -\Delta u +\lambda u =\left(\int_{\R^N}\frac{|u(y)|^{2_{\mu}^{\ast}}}{|x-y|^{\mu}}dy\right)|u|^{2_{\mu}^{\ast}-2}u  \quad \mbox{in} \quad \R^N.
\end{equation}
Similar to the observation made in \cite{BC} for the local case, we find by following the steps in \cite{GY} that a solution $u$ of \eqref{Nonlocal.S3} satisfies the Poho\u{z}aev type identity
$$
\frac{N-2}{2}\int_{\R^N} |\nabla u|^{2}dx+\frac{\lambda N}{2}\int_{\R^N} |u|^{2}dx=\frac{2N-\mu}{2\cdot2_{\mu}^{\ast}}\int_{\R^N}
\int_{\R^N}\frac{|u(x)|^{2_{\mu}^{\ast}}|u(y)|^{2_{\mu}^{\ast}}}{|x-y|^{\mu}}dxdy.
$$
Then we can deduce that
$$
\lambda\int_{\R^N} |u|^{2}dx=0
$$
and proves that there are no nontrivial solutions with $\lambda\neq 0$. Therefore the existence of solutions for the Choquard equation with upper critical exponent in $\R^N$, $N\geq3$,  is an interesting problem.

If the potential $V(x)$ is a continuous periodic function, the spectrum of the Schr\"{o}dinger operator $-\Delta +V$ is purely continuous and consists of a union of closed intervals. If $\inf_{\R^3} V(x)> 0$ and $
\frac{2N-\mu}{N}\leq p<\frac{2N-\mu}{N-2}
$, since the energy functional is invariant under translation, the existence of ground states by applying the Mountain Pass Theorem, see \cite{AC} for example.
If $V(x)$ changes sign, the operator $-\Delta +V$ has essential spectrum below $0$ and then equation \eqref{CCE1} becomes strongly indefinite.
In contrast to the positive definite case, it becomes more complicated to study the strongly indefinite Choquard equation due to the appearance of convolution part. For $p=2$ and $\mu=1$, the existence of one nontrivial solution was obtained in \cite{BJS} by reduction arguments. For a general class of subcritical Choquard type equation
 \begin{equation}\label{WCh}
 -\Delta u +V(x)u =\Big(\int_{\R^N}W(x-y)|u(y)|^{p}dy\Big)|u|^{p-2}u  \quad \mbox{in} \quad \R^3,
\end{equation}
the existence of solutions was obtained in \cite{AC} by applying a generalized linking theorem in \cite{TW}, where $W(x)>0$ belongs to a wide class of functions. The author also proved the existence of infinitely many geometrically distinct weak solutions by applying an abstract critical point theorem established in \cite{BD}.

Since the Choquard equation equipped with nonlocal type nonlinearities can be regarded as a generalization of the local Schr\"{o}dinger equation, we would also like to refer the readers to \cite{AC, BD, CSa, CY, D, KS, LWZ, SZ, SW, TW, Wi, WZ} for the study of the periodic Schr\"{o}dinger equation with critical or subcritical local nonlinearities. Among them, by supposing that $0$ lies in a gap of the spectrum of the operator $-\Delta +V$, in \cite{CSa} the authors considered
\begin{equation}\label{LSE}
-\Delta u+V(x)u
=K(x)|u|^{2^{\ast}-2}u+ f(x,u)\hspace{4.14mm}\mbox{in}\hspace{1.14mm} \R^{N}
\end{equation}
and obtained the existence of nontrivial solutions for equation \eqref{LSE} by careful energy estimates and linking arguments.

 Inspired by \cite{AC, BD, CSa}, the aim of the present paper is to study the existence of nontrivial solutions for the critical Choquard equation
\begin{equation}\label{CCE2}
\left\{\begin{array}{l}
\displaystyle-\Delta u+V(x)u
=\left(\int_{\mathbb{R}^N}\frac{K(y)|u(y)|^{2_{\mu}^{\ast}}+F(y,u)}{|x-y|^{\mu}}dy\right)
\Big(K(x)|u|^{2_{\mu}^{\ast}-2}u+ \frac{1}{2_{\mu}^{\ast}}f(x,u)\Big)\hspace{4.14mm}\mbox{in}\hspace{1.14mm} \R^{N},\\
\displaystyle u\in H^{1}(\R^{N}),
\end{array}
\right.
\end{equation}
where $N\geq4$, $0<\mu<4$,  $F(x,u)=\displaystyle\int_{0}^{u}f(x,s)ds$ and $2_{\mu}^{\ast}=(2N-\mu)/(N-2)$ is the upper critical exponent in the sense of the Hardy-Littlewood-Sobolev inequality. We suppose that the functions $V(x)$, $K(x)$ and $f(x,u)$ satisfy the following assumptions:
\begin{itemize}
\item[$(K_{1})$] $V$, $K\in C(\mathbb{R}^{N})$, $f\in C(\mathbb{R}^{N}\times\mathbb{R},\mathbb{R})$, $K(x)>0$ in $\mathbb{R}^{N}$ and $V$, $K$, $f$ are
1-periodic in $x_{j}$ for $j=1,\cdot\cdot\cdot,N$;

\item[$(K_2)$] There exist $\frac{2N-\mu}{N}<q \leq p<2_{\mu}^{\ast}$ and $c> 0$ such that for all $(x,u)\in\mathbb{R}^{N}\times\mathbb{R}$: $|f(x,u)|\leq c(|u|^{q-1}+|u|^{p-1})$;

\item[$(K_{3})$]There exists $\vartheta>1$ such that: for every $u\neq0$, $0<\vartheta F(x,u)\leq uf(x,u)$ on $\mathbb{R}^{N}\times\mathbb{R}$ ;

\item[$(K_{4})$] $0\not\in\sigma(-\Delta+V)$ and $\sigma(-\Delta+V)\cap(-\infty,0)\neq\emptyset$, where $\sigma$ denotes the spectrum in $L^{2}(\mathbb{R}^{N})$.
\end{itemize}

\begin{Rem}\label{AR1}
(i) Let
$
G(x,u)=K(x)|u|^{2_{\mu}^{\ast}}+F(x,u)
$
and
$
g(x,u)=\partial_uG(x,u)
$, in some parts of the paper equation \eqref{CCE2} are written as
\begin{equation}\label{CCE3}
\left\{\begin{array}{l}
\displaystyle-\Delta u+V(x)u
=\frac{1}{2_{\mu}^{\ast}}\left(\int_{\mathbb{R}^N}\frac{G(y,u)}{|x-y|^{\mu}}dy\right)g(x,u)\hspace{4.14mm}\mbox{in}\hspace{1.14mm} \R^{N},\\
\displaystyle u\in H^{1}(\R^{N}).
\end{array}
\right.
\end{equation}
Assumption $(K_3)$ implies the existence of constant $\theta>1$ such that the Ambrosetti-Rabinowitz condition for nonlocal problem holds:
\begin{center}  $0<\theta G(x,u)\leq ug(x,u)$ on $\mathbb{R}^{N}\times\mathbb{R}$, \ \ for every $u\neq0$.
\end{center}
(ii) For the local Schr\"{o}dinger equation, instead of assumption $(K_3)$, the authors in \cite{CSa} introduced the assumption:
\begin{center}
$0\leq2F(x,u)\leq uf(x,u)$ on $\mathbb{R}^{N}\times\mathbb{R}$.
\end{center}
However, for the nonlocal Choquard equation, a similar assumption
\begin{itemize}
\item[$(K'_{3})$]\hspace{4cm}
 $0\leq F(x,u)\leq uf(x,u)$ on $\mathbb{R}^{N}\times\mathbb{R}$
\end{itemize}
is not enough to ensure the boundedness of the $(PS)$ sequences. And so, we exclude the case of $f=0$ in our result.
\end{Rem}

Now we are ready to state the main result of this paper.
\begin{thm}\label{EXS3}
Suppose that assumptions $(K_{1})-(K_{4})$ are satisfied, $0<\mu<4\leq N$ and $K(x_{0})=\max_{\mathbb{R}^{N}}K(x)$. If $K(x)-K(x_{0})=o(|x-x_{0}|^{2})$ as $x\rightarrow x_{0}$ and $V(x_{0})<0$, then, equation \eqref{CCE2} has at least one nontrivial solution.
\end{thm}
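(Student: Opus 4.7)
The plan is to set up a linking geometry for the energy functional associated with \eqref{CCE3},
\[
\Phi(u)=\tfrac12\int_{\R^N}(|\nabla u|^2+V(x)u^2)\,dx-\tfrac{1}{2\cdot 2_{\mu}^{\ast}}\int_{\R^N}\!\!\int_{\R^N}\frac{G(x,u(x))G(y,u(y))}{|x-y|^{\mu}}\,dx\,dy,
\]
and then apply a generalized linking theorem of Kryszewski--Szulkin type to extract a Palais--Smale sequence at a minimax level $c_M$; the crux will be to push $c_M$ strictly below the first compactness threshold of the pure critical Choquard equation so that a nontrivial weak limit can be recovered by translation. By $(K_4)$ one decomposes $H^1(\R^N)=E^+\oplus E^-$ into the positive/negative spectral subspaces of $-\Delta+V$, so that the quadratic part of $\Phi$ becomes $\frac12(\|u^+\|^2-\|u^-\|^2)$ for an equivalent Hilbert norm. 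Proposition~\ref{HLS}, the Sobolev embedding and $(K_2)$ imply $\Phi\geq\alpha>0$ on a small sphere in $E^+$, while the nonlocal Ambrosetti--Rabinowitz inequality in Remark~\ref{AR1}(i) forces $\Phi\to-\infty$ on the cone $\R^+e\oplus E^-$ for any fixed unit vector $e\in E^+$. This gives the standard linking box $M=\{te+w:\,0\leq t\leq R,\ w\in E^-,\ \|w\|\leq R\}$ with $\Phi|_{\partial M}\leq 0$.

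\textbf{Critical level estimate.} The heart of the proof is to show
\[
c_M<c^{\ast}:=\frac{N+2-\mu}{2(2N-\mu)}\,S_{H,L}^{(2N-\mu)/(N+2-\mu)},
\]
where $S_{H,L}$ denotes the sharp constant of the Hardy--Littlewood--Sobolev--critical Choquard quotient. I would test with the scaled Talenti instantons
\[
U_\vr(x)=[N(N-2)]^{(N-2)/4}\left(\frac{\vr}{\vr^2+|x-x_0|^2}\right)^{(N-2)/2}
\]
centered at the maximum point $x_0$ of $K$, multiplied by a fixed cut-off near $x_0$, and then projected onto $E^+$. The analysis reduces to bounding $\max_{t\geq 0}\Phi(t\,U_\vr^+)$, and three ingredients should combine to beat the threshold: (i) the Brezis--Nirenberg type negative shift produced by $V(x_0)<0$, which contributes $-c\,\vr^2$ when $N\geq 5$ and $-c\,\vr^2|\log\vr|$ when $N=4$; (ii) the hypothesis $K(x)-K(x_0)=o(|x-x_0|^2)$, which keeps the perturbation of the critical term at the scale $o(\vr^2)$; and (iii) the subcritical term $F$, which by $(K_2)$ gives a positive but asymptotically smaller contribution. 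The restrictions $N\geq 4$ and $\mu<4$ are precisely what make the relevant convolution integrals decay slowly enough for the shift in (i) to dominate the combined error.

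\textbf{Compactness and conclusion.} Once $c_M<c^{\ast}$ is established, boundedness of the Palais--Smale sequence $(u_n)$ follows from Remark~\ref{AR1}(i) by testing against $u_n^+-u_n^-$, the standard device for strongly indefinite functionals with convolution-type nonlinearity; extracting a weak limit $u_\infty$, nonlocal Brezis--Lieb type lemmas imply $u_\infty$ is a critical point of $\Phi$. If $u_\infty\not\equiv 0$ we are done; otherwise a Lions-type nonvanishing argument together with the $\Z^N$-periodicity in $(K_1)$ allows one to translate $(u_n)$ by integer vectors $y_n\in\Z^N$ so that the shifted sequence admits a nontrivial weak limit, and the strict inequality $c_M<c^{\ast}$ rules out the escape of mass into an Aubin--Talenti bubble at infinity. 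The main obstacle is the level estimate $c_M<c^{\ast}$ itself: one must quantitatively bound $\|U_\vr^-\|$ via the resolvent of $-\Delta+V$, analyze mixed convolution integrals such as $\int\!\!\int|x-y|^{-\mu}U_\vr^{2_{\mu}^{\ast}}(x)F(y,U_\vr)\,dxdy$ through Proposition~\ref{HLS}, and balance the negative $V$-shift against both the $K$-perturbation and the projection error, which is exactly where the assumptions $V(x_0)<0$, $K(x)-K(x_0)=o(|x-x_0|^2)$, $N\geq 4$ and $\mu<4$ all play their role.
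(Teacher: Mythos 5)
Your route is the same as the paper's: the spectral decomposition from $(K_4)$, the generalized linking theorem of Bartsch--Ding/Kryszewski--Szulkin type, a Brezis--Nirenberg level estimate with cut-off Aubin--Talenti bubbles centred at the maximum point of $K$ (using $V(x_0)<0$ and $K(x)-K(x_0)=o(|x-x_0|^2)$), boundedness of the $(PS)$ sequence via the nonlocal Ambrosetti--Rabinowitz condition, and a Lions nonvanishing/$\Z^N$-translation argument. However, the proposal omits the ingredient on which both delicate steps of the paper actually turn, and one of your stated reductions would fail as written.

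First, the level estimate. The linking level is controlled by $\sup_M J_K$ with $M\subset E^-\oplus\R^+ u_\varepsilon$, so it does \emph{not} reduce to bounding $\max_{t\geq0}J_K(tu_\varepsilon^+)$: you must bound the supremum over the whole half-space of competitors $u^-+su_\varepsilon$ with $u^-\in E^-$ arbitrary. The paper (Lemma \ref{VK7}) handles the interaction between $u^-$ and the bubble through the special embedding $|u^-|_{1,\infty}\leq c_0|u^-|_2$ for $u^-\in E(0)L^2$ (Lemma \ref{VK1}) together with the $L^1$-type bounds \eqref{B191} on $u_\varepsilon$; the resulting cross terms $O(\varepsilon^{(N-2)/2})|u^-|_2$ are then absorbed by the negative definite part $-\|u^-\|_V^2$, leaving an error $O(\varepsilon^{N-2})$ that is beaten by the shift $d\varepsilon^2$ ($N\geq5$) resp.\ $d\varepsilon^2|\ln\varepsilon|$ ($N=4$). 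A vague appeal to ``the resolvent of $-\Delta+V$'' does not produce these quantitative bounds. Also, with nonconstant $K$ the threshold must carry the factor $|K|_{\infty}^{-\frac{2N-4}{N-\mu+2}}$, which your $c^{\ast}$ omits.

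Second, compactness. The same spectral-subspace embedding in the form of Lemma \ref{VK2} ($|u^-|_{2N/(N-4)}\leq c_2|u^-|_2$ for $N>4$, arbitrarily large exponent for $N=4$) is what shows, in Lemma \ref{VK6}, that $\|u_n^-\|_V\to0$ along a vanishing $(PS)$ sequence, so that vanishing would force $c^{\star}\geq c_K$; this, and not the decay rate of the convolution integrals, is where $N\geq4$ and $\mu<4$ really enter (cf.\ Remark \ref{AR4}, which is why $N=3$ is open). Your proposal never explains how the $E^-$ component is controlled in either step. In addition, boundedness of the $(PS)$ sequence is not just ``testing against $u_n^+-u_n^-$'': since $(K_3)$ only provides $\vartheta>1$, the paper needs the splitting $g=g_1+g_2$ and the dual estimate $\|\Phi'(u_n)\|_{H^{-1}}\leq C(1+\langle\Phi'(u_n),u_n\rangle)$ of Lemma \ref{VK5} before testing with $u_n^{\pm}$ closes the argument. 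Finally, once nonvanishing and translation give a nonzero weak limit, the weak sequential continuity of $\Phi'$ established in Section 2 already makes it a critical point; no Brezis--Lieb or bubble-exclusion step is required at that stage.
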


 \begin{Rem}\label{AR3}
 (i) The existence result obtained in Theorem \ref{EXS3}  extends the earlier results for the local critical Schr\"{o}dinger equation in \cite{CSa} to the case of nonlocal Choquard equation.

(ii) We believe that the same existence result still holds if assumption $(K_{3})$ is replaced by assumption $(K'_{3})$, in this case $f=0$ may not be excluded.
\end{Rem}
 \begin{Rem}\label{AR4}
As was commented in \cite{CSa, SZ}, the case $N=3$ is much more complicated and remains open. For the case $N\geq4$, to overcome the difficulties caused by the negative essential spectrum of $-\Delta +V$ and the loss of compactness due to the critical growth, the embedding property (Lemma \ref{VK2}) for elements of the negative space $E^-$ plays an important role. However, this embedding property does not work very well for the case $N=3$ in the compactness arguments.
\end{Rem}

\par
The paper is organized as follows: In Section 1 we introduce the background and the progress of the study of the nonlocal Choquard equation. The main existence result of the paper is given at the end of the section. In Section 2 we check that the energy functional of equation \eqref{CCE2} satisfies the geometry conditions of the generalized linking theorem and analyze the behavior of the $(PS)$ sequences. In Section 3 we apply the methods in \cite{BN} to give an estimate of the linking value and prove the existence of nontrivial solution by variational methods.

\section{\large  Linking geometry and $(PS)_c$ sequences}

Throughout this paper, we will denote by $C, C_{1}, C_{2}, \cdot\cdot\cdot$ the different positive constants and $|\cdot|_{q}$ the $L^{q}(\R^{N})$-norm. Here $|\cdot|_{m,q}$ will be used to denote the $W^{m,q}(\R^{N})$- norm, $m\in \Z^{+}$ and $q\in[1,\infty]$. The working space $E=H^{1}(\mathbb{R}^N)$ is equipped with the norm
$\|u\|^{2}:=\displaystyle\int_{\mathbb{R}^N}(|\nabla u|^{2}+|u|^{2})dx$.
To prove the main results by variational arguments,
we define the energy functional associated to \eqref{CCE3} by
$$
J_{K}(u)=\frac{1}{2}\int_{\mathbb{R}^N}(|\nabla u|^{2}+V(x)|u|^{2})dx-\frac{1}{2\cdot2_{\mu}^{\ast}}\int_{\mathbb{R}^N}
\int_{\mathbb{R}^N}\frac{G(x,u)G(y,u)}
{|x-y|^{\mu}}dxdy.
$$
Then the Hardy-Littlewood-Sobolev inequality implies that the functional $J_{K}$ belongs to $C^{1}(H^{1}(\mathbb{R}^N),\R)$ with
$$
\langle J_{K}'(u),\varphi\rangle=\int_{\mathbb{R}^N}(\nabla u\nabla\varphi +V(x)u\varphi) dx-\frac{1}{2_{\mu}^{\ast}}\int_{\mathbb{R}^N}\int_{\mathbb{R}^N}\frac{G(y,u)g(x,u)\varphi(x)}
{|x-y|^{\mu}}dxdy
$$
$\forall\varphi\in C_{0}^{\infty}(\mathbb{R}^N)$. Consequently, $u$ is a weak solution of equation \eqref{CCE3} if and only if $u$ is a critical point of the functional $J_{K}$.

Let $\mathcal{L}:\mathcal{D}(\mathcal{L})\subset L^{2}(\mathbb{R}^N)\rightarrow L^{2}(\mathbb{R}^N)$ be the operator defined by $\mathcal{L}(u):=-\Delta u+V(x)u$. By Lemma 2.1 in \cite{CSa} and assumption $(K_{1})$, we know that $\mathcal{L}$ is a closed operator with domain $\mathcal{D}(\mathcal{L})=H^{2}(\mathbb{R}^N)$, the spectrum of $\mathcal{L}$ is purely continuous and consists of a union of closed intervals. Let $(E(\lambda))_{\lambda\in\mathbb{R}}$ be the spectral family of $\mathcal{L}$, then for a fixed $\alpha$, $E(\alpha)L^{2}$ is the
subspace of $L^{2}$ corresponding to $\lambda\leq\alpha$. The following two lemmas are borrowed from \cite{CSa}, they are very important in proving the main result.

\begin{lem}\label{VK1}(\cite{CSa}) If $V\in L^{\infty}(\mathbb{R}^N)$ satisfies $(K_{4})$, then $|u|_{1,\infty}\leq c_{0}|u|_{2}$ for some constant $c_{0}>0$ and all $u\in E(0)L^{2}$.
\end{lem}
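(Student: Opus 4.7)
My plan is to show that elements of $E(0)L^{2}$ are ``band-limited'' in a spectral sense and therefore satisfy a Bernstein-type inequality for $\mathcal{L}=-\Delta+V$. First, assumption $(K_{1})$ gives $V\in L^{\infty}(\mathbb{R}^{N})$, so $\mathcal{L}$ is self-adjoint on $L^{2}$ with domain $H^{2}(\mathbb{R}^{N})$ and bounded below by $-\|V\|_{\infty}$. Combined with $(K_{4})$ and the closedness of the spectrum, there exists $\delta>0$ with $\sigma(\mathcal{L})\cap(-\delta,\delta)=\emptyset$, hence for every $u\in E(0)L^{2}$ the spectral measure of $u$ is supported in the compact interval $I:=[-\|V\|_{\infty},-\delta]$. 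The functional calculus then yields
$$
\|\mathcal{L}^{m}u\|_{2}\ \leq\ \|V\|_{\infty}^{m}\,\|u\|_{2}\quad\text{for every }m\in\mathbb{N},
$$
and in particular from $-\Delta u=\mathcal{L}u-Vu\in L^{2}$ I obtain $u\in H^{2}$ with $\|u\|_{H^{2}}\leq C\|u\|_{2}$, the base case for a regularity bootstrap.

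Second, I would pick $\tilde\varphi\in C_{c}^{\infty}(\mathbb{R})$ with $\tilde\varphi\equiv 1$ on $I$ and $\mathrm{supp}\,\tilde\varphi\subset(-\infty,0)$. On the spectrum of $\mathcal{L}$, $\tilde\varphi$ coincides with $\chi_{(-\infty,0]}$, so by uniqueness of the functional calculus $E(0)=\tilde\varphi(\mathcal{L})$, and hence $u=\tilde\varphi(\mathcal{L})u$. The technical core of the argument is to show
$$
\tilde\varphi(\mathcal{L})\ :\ L^{2}(\mathbb{R}^{N})\ \longrightarrow\ W^{1,\infty}(\mathbb{R}^{N})\quad\text{is bounded.}
$$
This I would establish via the Riesz contour representation
$$
\tilde\varphi(\mathcal{L})=\frac{1}{2\pi i}\oint_{\Gamma}\tilde\varphi(z)(z-\mathcal{L})^{-1}\,dz,
$$
where $\Gamma$ is a closed contour in the resolvent set enclosing $I$, combined with Calder\'on--Zygmund $L^{p}$-estimates for the resolvent (available since $V\in L^{\infty}$) and Combes--Thomas exponential decay of the resolvent kernel $(z-\mathcal{L})^{-1}(x,y)$ in $|x-y|$, which holds precisely because of the spectral gap at $0$. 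These produce uniform-in-$x$ $L^{2}$-bounds on the integral kernel of $\tilde\varphi(\mathcal{L})$ and its $x$-gradient, from which Cauchy--Schwarz yields the desired $W^{1,\infty}$ estimate for $u=\tilde\varphi(\mathcal{L})u$, i.e., $|u|_{1,\infty}\leq c_{0}|u|_{2}$.

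The principal obstacle is that with $V$ only continuous, a direct Sobolev bootstrap of the equation $\mathcal{L}u=-\Delta u+Vu$ is blocked by the need to differentiate the product $Vu$: iteration gains only two derivatives before stalling. The functional-calculus/contour approach sidesteps this by smoothing through the resolvent rather than through differentiation of the equation, and the spectral gap in $(K_{4})$ is precisely what makes both the contour $\Gamma$ and the Combes--Thomas exponential decay meaningful.
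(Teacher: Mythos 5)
The paper does not prove this lemma; it is taken verbatim from Chabrowski--Szulkin \cite{CSa} (who in turn refer to \cite{CY}), and the argument there is an elementary elliptic bootstrap. Your diagnosis of the ``principal obstacle'' is off: the bootstrap does not stall, because one never needs to differentiate $V$. Set $v_k:=\mathcal{L}^k u$. Since the spectral measure of $u$ is supported in the compact interval $[-|V|_\infty,-\delta]$, each $v_k\in L^2$ with $|v_k|_2\leq|V|_\infty^k|u|_2$, and the whole tower obeys $-\Delta v_k=v_{k+1}-Vv_k$. Because multiplication by $V\in L^\infty$ is bounded on every $L^q$, the step ``$v_k,v_{k+1}\in L^q$ $\Rightarrow$ $-\Delta v_k\in L^q$ $\Rightarrow$ $v_k\in W^{2,q}\hookrightarrow L^{q^*}$'' can be iterated simultaneously in $k$, raising the integrability exponent from $2$ until $q>N$, at which point $u=v_0\in W^{2,q}\hookrightarrow W^{1,\infty}$ with all constants controlled solely by $|u|_2$. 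No derivatives of $V$ ever appear; what you missed is that you must bootstrap the entire sequence $\{v_k\}_{k\geq0}$ in the $L^q$ scale, not just $v_0=u$ in the $H^{2m}$ scale.

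Your contour-calculus replacement also has a concrete gap. First, the Riesz--Dunford formula $\tilde\varphi(\mathcal{L})=\frac{1}{2\pi i}\oint_\Gamma\tilde\varphi(z)(z-\mathcal{L})^{-1}\,dz$ requires $\tilde\varphi$ holomorphic near the enclosed spectrum, not merely $C_c^\infty$; what you want is the Riesz projection $E(0)=\frac{1}{2\pi i}\oint_\Gamma(z-\mathcal{L})^{-1}\,dz$, which is legitimate precisely because of the gap. More seriously, a single resolvent factor cannot deliver a $W^{1,\infty}$ bound from $L^2$ data in the regime $N\geq4$ considered here: the kernel of $(z-\mathcal{L})^{-1}$ retains the free Green-function singularity $|x-y|^{2-N}$ on the diagonal, which is not locally square-integrable when $N\geq4$, so the Cauchy--Schwarz step you invoke (``uniform-in-$x$ $L^2$-bounds on the integral kernel and its gradient'') fails. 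Combes--Thomas controls the off-diagonal decay but does nothing for this local singularity. To make your route work you would need to iterate resolvents (for instance via $E(0)=E(0)^m$ for $m$ large), at which point the argument collapses back into the simple bootstrap described above.
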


\begin{lem}\label{VK2}(\cite{CSa}) If $V\in L^{\infty}(\mathbb{R}^N)$, then for each $\alpha\in\mathbb{R}$ there exist constants $c_{1}$ and $c_{2}=c_{2}(\alpha)$ such that $|u|_{q}\leq c_{1}|u|_{2,2}\leq c_{2}|u|_{2}$ whenever $u\in E(\alpha)L^{2}$. Here $q=\frac{2N}{N-4}$ if $N>4$, $q$ may be taken arbitrarily large if $N=4$ and $q=\infty$ if $N<4$.
\end{lem}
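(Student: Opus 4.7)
The plan is to realize nontrivial solutions of equation \eqref{CCE2} as critical points of the strongly indefinite functional $J_{K}$, combining a generalized linking theorem (of the type in \cite{TW, BD}) with a Brezis-Nirenberg-style level estimate tailored to the Hardy-Littlewood-Sobolev nonlinearity. First I would use the spectral family $(E(\lambda))$ of $\mathcal{L}=-\Delta+V$ together with $(K_{4})$ to decompose the working space as $E=E^{-}\oplus E^{+}$, where $E^{-}=E(0)L^{2}\cap H^{1}(\R^{N})$ and $E^{+}=(\mathrm{id}-E(0))L^{2}\cap H^{1}(\R^{N})$, and introduce the equivalent norm $\|u\|_{*}^{2}=(|\mathcal{L}|u,u)_{L^{2}}$ so that
\[
J_{K}(u)=\tfrac{1}{2}\|u^{+}\|_{*}^{2}-\tfrac{1}{2}\|u^{-}\|_{*}^{2}-\Psi(u),
\]
with $\Psi(u)=\frac{1}{2\cdot 2_{\mu}^{\ast}}\int\!\!\int\frac{G(x,u)G(y,u)}{|x-y|^{\mu}}dxdy\geq 0$ thanks to $(K_{3})$.

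Next I would verify the linking geometry. The Hardy-Littlewood-Sobolev inequality combined with $(K_{2})$ gives $\Psi(u)\leq C(\|u\|_{*}^{2q}+\|u\|_{*}^{2\cdot 2_{\mu}^{\ast}})$, so $J_{K}$ is positive on a small sphere $S_{\rho}^{+}\subset E^{+}$. For the upper bound one picks a suitably rescaled Aubin-Talenti instanton $U_{\vr}$ concentrating at $x_{0}$ (the extremal of the sharp constant $S_{H,L}$ associated with equation \eqref{Nonlocal.S3}, cf.\ \cite{GY}), splits $U_{\vr}=U_{\vr}^{+}+U_{\vr}^{-}$ along the decomposition, and checks that $J_{K}$ is negative on the boundary of the box $M=\{v+tU_{\vr}^{+}:v\in E^{-},\;\|v\|_{*}\leq R,\;0\leq t\leq R\}$ for large $R$; here the AR-type inequality from Remark \ref{AR1}(i) forces $\Psi$ to grow super-quadratically. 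The generalized linking theorem then produces a $(PS)_{c}$ sequence $(u_{n})$ at a minimax level
\[
0<c=\inf_{\gamma\in\Gamma}\sup_{u\in M}J_{K}(\gamma(u)).
\]

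The core difficulty, and the main obstacle, is a sharp upper bound $c<c^{\ast}$, where $c^{\ast}$ is the critical threshold below which $(PS)_{c}$ sequences of $J_{K}$ recover compactness; the natural candidate is
\[
c^{\ast}=\tfrac{N+2-\mu}{2(2N-\mu)}\Big(\tfrac{S_{H,L}}{K(x_{0})^{2/2_{\mu}^{\ast}}}\Big)^{(2N-\mu)/(N+2-\mu)}.
\]
Following \cite{BN} in the form adapted in \cite{CSa, GY}, I would compute $\sup_{v\in E^{-},\,t\geq 0}J_{K}(v+tU_{\vr}^{+})$. The hypothesis $K(x_{0})=\max K$ together with $K(x)-K(x_{0})=o(|x-x_{0}|^{2})$ forces the critical convolution term to match that of the model problem \eqref{Nonlocal.S3} with constant $K(x_{0})$ up to an $o(\vr^{2})$ error, while the destabilizing negative contribution from $v\in E^{-}$ is controlled by the embedding $|u|_{q}\leq c_{2}|u|_{2}$ of Lemma \ref{VK2}; this is precisely where the restriction $N\geq 4$ is essential, as noted in Remark \ref{AR4}. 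The negative term $V(x_{0})\int U_{\vr}^{2}dx$ then gives a gain of order $\vr^{2}|\log\vr|$ when $N=4$ and $\vr^{2}$ when $N\geq 5$, which strictly beats the remainder and yields $c<c^{\ast}$.

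Finally I would analyze the $(PS)_{c}$ sequence $(u_{n})$. Testing $J_{K}(u_{n})-\frac{1}{\theta}\langle J_{K}'(u_{n}),u_{n}\rangle$ with $\theta>1$ supplied by Remark \ref{AR1}(i) bounds $\|u_{n}\|_{*}$ (this is where $f\not\equiv 0$ is needed, consistent with Remark \ref{AR1}(ii)). Passing to a subsequence, $u_{n}\rightharpoonup u$ with $J_{K}'(u)=0$. If $u\equiv 0$, the $\Z^{N}$-periodicity in $(K_{1})$ allows us to translate $u_{n}$ to rescue a nontrivial weak limit unless concentration into critical bubbles occurs; a Brezis-Lieb-type splitting for the convolution nonlinearity (as in \cite{GY, MS3}) shows that any such bubble carries energy at least $c^{\ast}$, contradicting $c<c^{\ast}$. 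Hence $u\not\equiv 0$ is a nontrivial weak solution of \eqref{CCE2}, completing the proof.
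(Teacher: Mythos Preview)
Your proposal does not address the stated lemma at all. Lemma~\ref{VK2} is a self-contained regularity/embedding statement about the spectral subspaces $E(\alpha)L^{2}$ of the operator $\mathcal{L}=-\Delta+V$: for $u\in E(\alpha)L^{2}$ one has $|u|_{q}\leq c_{1}|u|_{2,2}\leq c_{2}(\alpha)|u|_{2}$. What you have written is instead an outline of the proof of the main existence result (Theorem~\ref{EXS3}), in which Lemma~\ref{VK2} is merely one ingredient. Nothing in your text establishes either of the two inequalities in the lemma.

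For the record, the paper does not give its own proof of Lemma~\ref{VK2}; it quotes the result from \cite{CSa}. The argument there is short and has nothing to do with linking geometry or level estimates: since $u\in E(\alpha)L^{2}$, the spectral calculus gives $|\mathcal{L}u|_{2}\leq C(\alpha)|u|_{2}$, and because $V\in L^{\infty}$ one then has $|\Delta u|_{2}\leq |\mathcal{L}u|_{2}+|V|_{\infty}|u|_{2}\leq C(\alpha)|u|_{2}$, so $|u|_{2,2}\leq c_{2}(\alpha)|u|_{2}$. The first inequality $|u|_{q}\leq c_{1}|u|_{2,2}$ is just the Sobolev embedding $H^{2}(\R^{N})\hookrightarrow L^{q}(\R^{N})$ with the stated exponent $q$ depending on whether $N>4$, $N=4$, or $N<4$. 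Your write-up contains none of this; it should be replaced by the two-line spectral-plus-Sobolev argument above.
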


 Let $E^{-}:=E(0)L^{2}\cap H^{1}(\mathbb{R}^N)$ and $E^{+}:=(I-E(0))L^{2}\cap H^{1}(\mathbb{R}^N)$. Since $0$ lies in a gap of the spectrum of $\mathcal{L}$, the quadratic form $\displaystyle\int_{\mathbb{R}^N}(|\nabla u|^{2}+Vu^{2})dx$ is positive
definite on $E^{+}$ and negative definite on $E^{-}$. Furthermore we
can introduce a new inner product $\langle\cdot,\cdot\rangle$ in $E$ such that the corresponding norm $\|\cdot\|_{V}$ is equivalent to $\|\cdot\|$ and $\displaystyle\int_{\mathbb{R}^N}(|\nabla u|^{2}+V(x)u^{2})dx=\|u^{+}\|_{V}^{2}-\|u^{-}\|_{V}^{2}$, where $u^{\pm}\in E^{\pm}$. Set
$$\Phi(u):=\frac{1}{2\cdot2_{\mu}^{\ast}}\int_{\mathbb{R}^N}
\int_{\mathbb{R}^N}\frac{G(x,u)G(y,u)}
{|x-y|^{\mu}}dxdy,$$
it is obvious that $\Phi\geq0$.
Then the functional $J_K$ can be rewritten as
$$\aligned
J_{K}(u)=\frac{1}{2}\|u^{+}\|_{V}^{2}-\frac{1}{2}\|u^{-}\|_{V}^{2}-\Phi(u).
\endaligned$$
 It follows from the Hardy-Littlewood-Sobolev inequality and Fatou's lemma that $\Phi$ is weakly sequentially lower semicontinuous. Notice that
$$\aligned
\langle \Phi'(u),\varphi\rangle&=\frac{1}{2_{\mu}^{\ast}}\int_{\mathbb{R}^N}\int_{\mathbb{R}^N}\frac{G(y,u)g(x,u)\varphi(x)}
{|x-y|^{\mu}}dxdy\\
&=\int_{\mathbb{R}^N}\left(\int_{\mathbb{R}^N}\frac{K(y)|u(y)|^{2_{\mu}^{\ast}}+F(y,u)}{|x-y|^{\mu}}dy\right)\Big(K(x)|u(x)|^{2_{\mu}^{\ast}-2}u(x)+ \frac{1}{2_{\mu}^{\ast}}f(x,u)\Big)\varphi(x) dx,
\endaligned$$
since $f(x,u)$ is of subcritical growth in the sense of the Hardy-Littlewood-Sobolev inequality, to show that $\Phi'$ is weakly sequentially continuous, we only need to check that if $u_{n}\rightharpoonup u$ in $E$ then
$$
\int_{\mathbb{R}^N}\frac{K(y)|u_n(y)|^{2_{\mu}^{*}}K(x)|u_{n}(x)|^{2_{\mu}^{\ast}-2}u_{n}(x)\varphi(x)}
{|x-y|^{\mu}}dxdy\rightarrow \int_{\mathbb{R}^N}\frac{K(y)|u(y)|^{2_{\mu}^{*}}K(x)|u(x)|^{2_{\mu}^{\ast}-2}u(x)\varphi(x)}
{|x-y|^{\mu}}dxdy $$
for any $\varphi\in E$, as $n\rightarrow+\infty$.
In fact,
by the Hardy-Littlewood-Sobolev inequality,
the Riesz potential defines a linear continuous map from  $L^{\frac{2N}{2N-\mu}}(\mathbb{R}^N)$ to $L^{\frac{2N}{\mu}}(\R^N)$,  we know
$$
\int_{\mathbb{R}^N}\frac{K(y)|u_n(y)|^{2_{\mu}^{*}}}
{|x-y|^{\mu}}dy\rightharpoonup \int_{\mathbb{R}^N}\frac{K(y)|u(y)|^{2_{\mu}^{*}}}
{|x-y|^{\mu}}dy \hspace{3.14mm} \mbox{in} \hspace{3.14mm} L^{\frac{2N}{\mu}}(\mathbb{R}^N),
$$
since
$
|u_{n}|^{2_{\mu}^{*}}\rightharpoonup |u|^{2_{\mu}^{*}}$ in $ L^{\frac{2N}{2N-\mu}}(\mathbb{R}^N)
$
as $n\rightarrow+\infty$. Combing with the fact that
$$
K(x)|u_{n}|^{2_{\mu}^{\ast}-2}u_{n}\rightharpoonup K(x)|u|^{2_{\mu}^{\ast}-2}u \hspace{3.14mm} \mbox{in} \hspace{3.14mm} L^{\frac{2N}{N-\mu+2}}(\mathbb{R}^N)
$$
as $n\rightarrow+\infty$, we find
$$
\Big(\int_{\mathbb{R}^N}\frac{K(y)|u_n(y)|^{2_{\mu}^{*}}}
{|x-y|^{\mu}}dy\Big)K(x)|u_{n}(x)|^{2_{\mu}^{\ast}-2}u_{n}(x)\rightharpoonup \Big(\int_{\mathbb{R}^N}\frac{K(y)|u(y)|^{2_{\mu}^{*}}}
{|x-y|^{\mu}}dy\Big)K(x)|u(x)|^{2_{\mu}^{\ast}-2}u(x) \hspace{3.14mm} \mbox{in} \hspace{3.14mm} L^{\frac{2N}{N+2}}(\mathbb{R}^N)
$$
as $n\rightarrow+\infty$. Thus, for any $\varphi\in E$,
$$
\int_{\mathbb{R}^N}\frac{K(y)|u_n(y)|^{2_{\mu}^{*}}K(x)|u_{n}(x)|^{2_{\mu}^{\ast}-2}u_{n}(x)\varphi(x)}
{|x-y|^{\mu}}dxdy\rightarrow \int_{\mathbb{R}^N}\frac{K(y)|u(y)|^{2_{\mu}^{*}}K(x)|u(x)|^{2_{\mu}^{\ast}-2}u(x)\varphi(x)}
{|x-y|^{\mu}}dxdy $$
as $n\rightarrow+\infty$.

In order to look for nontrivial critical points of the functional $J_K$, we will apply
an abstract critical point theorem in \cite{BD, D}. Let $z_{0}\in E^{+}\backslash\{0\}$,
$$
M:=\{u=u^{-}+sz_{0}:u^{-}\in E^{-},s\geq0 \ \mbox{and} \ \|u\|_{V}\leq R\}
$$
and denote the boundary of $M$ in $E^{-}\oplus \mathbb{R}z_{0}$ by $\partial M$. In the following lemma we check that the functional $J_{K}$ satisfies the geometric structure of the generalized Linking Theorem:

\begin{lem}\label{VK3} The functional $J_{K}$ satisfies the following properties:\\
(i) There exist $\varpi,\rho>0$ such that for any $u\in E^{+}\cap\partial B(0,\rho)$ it results that $J_{K}(u)\geq\varpi$. \\
(ii) There exist $R>\rho$ ($R$ depending on $z_{0}$) such that $J_{K}(u)\leq0$ for any $u\in \partial M$.
\end{lem}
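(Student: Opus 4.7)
The plan is to prove (i) by a standard upper bound on $\Phi$ and (ii) by a two-step case split on $\partial M$, with the nontrivial step handled by a normalized compactness argument that leans on Lemma~\ref{VK1}.

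For (i), since $u\in E^{+}$ gives $J_{K}(u)=\tfrac12\|u\|_{V}^{2}-\Phi(u)$, it suffices to show $\Phi(u)=o(\|u\|_{V}^{2})$ as $\|u\|_{V}\to 0$. I would apply the Hardy-Littlewood-Sobolev inequality (Proposition~\ref{HLS}) with $t=r=\frac{2N}{2N-\mu}$ to obtain $\Phi(u)\le C\,|G(\cdot,u)|_{2N/(2N-\mu)}^{2}$, then split $G=K|u|^{2_{\mu}^{\ast}}+F(\cdot,u)$; the arithmetic $2_{\mu}^{\ast}\cdot\frac{2N}{2N-\mu}=2^{\ast}$ turns the critical contribution into $|u|_{2^{\ast}}^{2_{\mu}^{\ast}}\le C\|u\|_{V}^{2_{\mu}^{\ast}}$, while $(K_{2})$ combined with the Sobolev embedding controls the subcritical contribution by $\|u\|_{V}^{q}+\|u\|_{V}^{p}$ (the exponents $q\cdot\frac{2N}{2N-\mu}$ and $p\cdot\frac{2N}{2N-\mu}$ lying in $[2,2^{\ast}]$). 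Squaring produces only powers $\|u\|_{V}^{\beta}$ with $\beta>2$, since $q,p,2_{\mu}^{\ast}>\frac{2N-\mu}{N}>1$, so a sufficiently small $\rho>0$ gives $J_{K}(u)\ge\varpi:=\rho^{2}/4$ on $E^{+}\cap\partial B(0,\rho)$.

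For (ii), I would decompose $\partial M$ into the face $\{u^{-}\in E^{-},\ \|u^{-}\|_{V}\le R\}$ (where $s=0$), on which $J_{K}(u)=-\tfrac12\|u^{-}\|_{V}^{2}-\Phi(u)\le 0$ is immediate from $\Phi\ge 0$, and the hemisphere $\{u^{-}+sz_{0}:s\ge 0,\ \|u\|_{V}=R\}$. On the latter, the identity $\|u^{-}\|_{V}^{2}+s^{2}\|z_{0}\|_{V}^{2}=R^{2}$ splits matters in two. When $\|u^{-}\|_{V}^{2}\ge R^{2}/2$ we get $s^{2}\|z_{0}\|_{V}^{2}\le R^{2}/2\le\|u^{-}\|_{V}^{2}$, so $J_{K}(u)\le\tfrac12 s^{2}\|z_{0}\|_{V}^{2}-\tfrac12\|u^{-}\|_{V}^{2}\le 0$. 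In the remaining sub-region $s>R/(\sqrt 2\,\|z_{0}\|_{V})$, the claim $J_{K}(u)\le 0$ reduces to showing $\Phi(u)\ge\tfrac12 s^{2}\|z_{0}\|_{V}^{2}$ once $R=R(z_{0})$ is chosen large.

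I expect this last estimate to be the main obstacle and would establish it by contradiction. Assume $u_{n}=u_{n}^{-}+s_{n}z_{0}$ in that sub-region with $R_{n}:=\|u_{n}\|_{V}\to\infty$ and $\Phi(u_{n})\le R_{n}^{2}/2$. Normalizing $v_{n}=u_{n}/R_{n}=v_{n}^{-}+t_{n}z_{0}$ gives $\|v_{n}\|_{V}=1$, $\|v_{n}^{-}\|_{V}<1/\sqrt 2$, and $t_{n}\in(t_{0},1/\|z_{0}\|_{V}]$ with $t_{0}:=1/(\sqrt 2\|z_{0}\|_{V})>0$. Since $\{v_{n}^{-}\}\subset E(0)L^{2}$ is bounded in $H^{1}$, Lemma~\ref{VK1} bounds it in $W^{1,\infty}$; Arzel\`a-Ascoli together with a diagonal extraction yields a subsequence with $v_{n}^{-}\to v^{-}$ locally uniformly, hence a.e., with $v^{-}\in E^{-}$ the weak $H^{1}$-limit. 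Combined with $t_{n}\to t\ge t_{0}$ this gives $v_{n}\to v:=v^{-}+tz_{0}$ a.e. on $\R^{N}$. Because $v^{-}\in E^{-}$ and $tz_{0}\in E^{+}$ are $\langle\cdot,\cdot\rangle_{V}$-orthogonal and $tz_{0}\neq 0$, the limit $v$ is not identically zero. Using $G(x,u)\ge K(x)|u|^{2_{\mu}^{\ast}}$ together with $\inf_{\R^{N}}K>0$ (from positivity and periodicity in $(K_{1})$), Fatou's lemma yields
\[
\liminf_{n\to\infty}\frac{\Phi(u_{n})}{R_{n}^{2\cdot 2_{\mu}^{\ast}}}\ge\frac{(\inf K)^{2}}{2\cdot 2_{\mu}^{\ast}}\int_{\R^{N}}\!\!\int_{\R^{N}}\frac{|v(x)|^{2_{\mu}^{\ast}}|v(y)|^{2_{\mu}^{\ast}}}{|x-y|^{\mu}}\,dx\,dy>0,
\]
so $\Phi(u_{n})\ge cR_{n}^{2\cdot 2_{\mu}^{\ast}}$ for large $n$; since $2\cdot 2_{\mu}^{\ast}>2$ this contradicts $\Phi(u_{n})\le R_{n}^{2}/2$, finishing the argument for $R=R(z_{0})$ sufficiently large. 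The crux is exactly this compactness step: a bare weak limit in $H^{1}$ need not survive to give positive Choquard energy, and it is Lemma~\ref{VK1}'s $W^{1,\infty}$ embedding on $E^{-}$ that upgrades the weak convergence of $\{v_{n}^{-}\}$ to the pointwise convergence needed to keep the nonzero $E^{+}$-component in the limit---also the reason the paper restricts to $N\ge 4$.
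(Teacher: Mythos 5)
Your part (i) is essentially the paper's own argument (HLS plus Sobolev to get $\Phi(u)\le C\|u\|_{V}^{2\cdot2_{\mu}^{\ast}}+C\|u\|_{V}^{2q}+C\|u\|_{V}^{2p}$ with all exponents $>2$), so nothing to add there. Your part (ii) is correct but follows a genuinely different route. The paper invokes the Ambrosetti--Rabinowitz-type consequence of $(K_{3})$, $\langle\Phi'(u),u\rangle\ge2\theta\Phi(u)$, integrates it to get $\Phi(u)\ge\Phi(u/\|u\|_{V})\|u\|_{V}^{2\theta}$, and then splits $E^{-}\oplus\R z_{0}$ by the angle condition $\|u^{+}\|_{V}/\|u^{-}\|_{V}\gtrless\beta$, using the (unproved, ``easy'') claim $\inf_{\mathbb{P}}\Phi>0$ on the unit-sphere cone $\mathbb{P}$. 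You instead split the sphere $\|u\|_{V}=R$ by whether $\|u^{-}\|_{V}^{2}\ge R^{2}/2$, and in the remaining region prove superquadratic growth of $\Phi$ directly by a normalized contradiction sequence: a.e.\ convergence of $v_{n}=v_{n}^{-}+t_{n}z_{0}$ to a nonzero limit (nonzero because the $E^{+}$-component $tz_{0}$ survives the orthogonal splitting), Fatou, $F\ge0$ and $\inf K>0$ give $\Phi(u_{n})\ge cR_{n}^{2\cdot2_{\mu}^{\ast}}$, contradicting $\Phi(u_{n})\le R_{n}^{2}/2$. What your route buys: it only uses $F\ge0$ and the $2_{\mu}^{\ast}$-homogeneity of the critical term rather than the full AR integration trick, and it makes explicit the compactness that the paper hides inside ``$\inf_{\mathbb{P}}\Phi>0$'' (which is proved by exactly the kind of weak-limit-plus-Fatou argument you wrote out); what the paper's route buys is a cleaner uniform lower bound $\Phi(u)\ge\varrho\|u\|_{V}^{2\theta}$ on the whole cone, independent of $R$. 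Two small inaccuracies in your commentary, neither a gap: the upgrade from weak $H^{1}$-convergence of $v_{n}^{-}$ to a.e.\ convergence does not require Lemma~\ref{VK1} and Arzel\`a--Ascoli --- local Rellich compactness already gives it --- and the restriction $N\ge4$ in the paper is tied to the embedding of Lemma~\ref{VK2} used in the compactness and energy estimates (Lemma~\ref{VK6} and Lemma~\ref{VK7}), not to the linking geometry of this lemma.
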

\begin{proof} (i) It follows from assumption $(K_{2})$ that there exists $C$ such that
$
|G(x,u)|\leq C(|u|^{q}+|u|^{2_{\mu}^{\ast}}).
$
By the Sobolev embedding and the Hardy-Littlewood-Sobolev inequality, for all $u\in E^{+}\backslash\ \{0\}$ we have
$$
\aligned
J_{K}(u)&\geq\frac{1}{2}\|u^{+}\|_{V}^{2}-C_{0}|u|_{2^{\ast}}^{2\cdot2_{\mu}^{\ast}}
-C_{1}|u|_{\frac{2qN}{2N-\mu}}^{2q}\\
&\geq\frac{1}{2}\|u^{+}\|_{V}^{2}-C_{2}\|u\|_{V}^{2\cdot2_{\mu}^{\ast}}-C_{3}\|u\|_{V}^{2q}.\\
\endaligned
$$
Since $2<2q<2\cdot2_{\mu}^{\ast}$, we can choose some $\varpi,\rho>0$ such that $J_{K}(u)\geq\varpi$ for $u\in E^{+}\cap\partial B(0,\rho)$.

(ii) For any $u\in E$ with $\|u\|_{V}>1$ and for any $t>0$, we define $g(t):=\Phi(\frac{tu}{\|u\|_{V}})>0$. By Remark \ref{AR1}, we have for any $u\in E\backslash\{0\}$, $\langle \Phi'(u),u\rangle\geq2\theta\Phi(u)>0$.
Thus, we can get for any $t>0$
$$
\frac{g'(t)}{g(t)}\geq\frac{2\theta}{t}.
$$
Integrating it over $[1,\|u\|_{V}]$ we know
\begin{equation}\label{D6.2}
\Phi(u)\geq \Phi(\frac{u}{\|u\|_{V}})\|u\|_{V}^{2\theta}.
\end{equation}
Following \cite{AC}, for $\beta\in(0,1)$, we set $\gamma= \sin(\arctan\beta)\in(0,1)$ and
$$
\mathbb{P}=\{u\in E:u^{+}=sz^0,|s| \|z^0\|_{V}\geq\gamma,\|u\|_{V}=1\},
$$
then it is easy to prove that
$\inf_{u\in \mathbb{P}}\Phi(u):=\varrho>0$. For $u\in E^{-}\oplus \R z^0$ satisfy $\|u\|_{V}>1$. $(1).$ If $\frac{\|u^{+}\|_{V}}{\|u^{-}\|_{V}}\geq\beta$ then $
\frac{\|u^{+}\|_{V}}{\|u\|_{V}}=\sin(\arctan\frac{\|u^{+}\|_{V}}{\|u^{-}\|_{V}})\geq\gamma
$
and therefore $\frac{u}{\|u\|_{V}}\in \mathbb{P}$. Consequently, $
\Phi(u)\geq \varrho\|u\|_{V}^{2\theta}
$
and thus
$$
J_{K}(u)\leq\frac{1}{2}\|u\|_{V}^{2}-\varrho\|u\|_{V}^{2\theta},
$$
the claim is proved
if $\|u\|_{V}$ is large enough.
 $(2).$ If $\frac{\|u^{+}\|_{V}}{\|u^{-}\|_{V}}\leq\beta$ we have
\begin{equation}\label{D6.3}
J_{K}(u)\leq\frac{1}{2}(\|u^{+}\|_{V}^{2}-\|u^{-}\|_{V}^{2})\leq-\frac{1-\beta^{2}}{2(1+\beta^{2})}\|u\|_{V}^{2},
\end{equation}
the claim is proved
since $0<\beta<1$.
\end{proof}

\begin{Rem}\label{VK4}
Applying the generalized Linking Theorem in \cite{BD, D}, we know that there exists a $(PS)$ sequence $\{u_{n}\}\subset E$ for the functional $J_{K}$ such that $J_{K}(u_{n})\rightarrow c^{\star}\in[\varpi,\sup_{M}J_{K}]$.
\end{Rem}

\begin{lem}\label{VK5} The $(PS)_{c^{\star}}$ sequence  $\{u_{n}\}$ obtained in Remark \ref{VK4} is bounded.
\end{lem}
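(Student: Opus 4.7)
The plan is to argue by contradiction: suppose $\|u_n\|_V \to \infty$ along a subsequence. The first main ingredient is the Ambrosetti--Rabinowitz type inequality $\langle \Phi'(u), u\rangle \ge 2\theta \Phi(u)$ with $\theta > 1$, from the preceding remark. Combining the identities
\[
2J_K(u_n) = \|u_n^+\|_V^2 - \|u_n^-\|_V^2 - 2\Phi(u_n) = 2c^\star + o(1),
\]
\[
\langle J_K'(u_n), u_n\rangle = \|u_n^+\|_V^2 - \|u_n^-\|_V^2 - \langle \Phi'(u_n), u_n\rangle = o(\|u_n\|_V),
\]
by subtraction and applying A--R yields $2(\theta-1)\Phi(u_n) \le 2c^\star + o(1) + o(\|u_n\|_V)$, so that both $\Phi(u_n)$ and $\langle \Phi'(u_n), u_n\rangle$ grow at most linearly: $O(1 + \|u_n\|_V)$.

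Next, testing $J_K'(u_n) \to 0$ against $u_n^{\pm}$ gives
\[
\|u_n^+\|_V^2 = \langle \Phi'(u_n), u_n^+\rangle + o(\|u_n^+\|_V), \qquad \|u_n^-\|_V^2 = -\langle \Phi'(u_n), u_n^-\rangle + o(\|u_n^-\|_V),
\]
and it remains to bound $|\langle \Phi'(u_n), u_n^{\pm}\rangle|$ suitably. I would invoke the Cauchy--Schwarz inequality for the positive-definite Riesz kernel $|x-y|^{-\mu}$ (valid for $0 < \mu < N$) to obtain
\[
|\langle \Phi'(u_n), \varphi\rangle|^2 \le \frac{2\Phi(u_n)}{2_\mu^\ast}\int_{\R^N}\int_{\R^N}\frac{g(x,u_n)\varphi(x)\, g(y,u_n)\varphi(y)}{|x-y|^\mu}\,dxdy,
\]
which already extracts a controllable $\sqrt{\Phi(u_n)} = O(\|u_n\|_V^{1/2})$ factor from Step 1. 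The remaining double integral is controlled by HLS together with H\"older: for $\varphi = u_n^+$ one uses the Sobolev embedding $|u_n^+|_{2^\ast}\le C\|u_n^+\|_V$ combined with $|g(u_n)|_{2N/(N+2-\mu)}\le C\|u_n\|_V^{2_\mu^\ast-1}$; for $\varphi = u_n^-$ one instead applies the pointwise bound $|u_n^-|_\infty \le c_0 |u_n^-|_2 \le C\|u_n^-\|_V$ furnished by Lemma \ref{VK1} (and, if needed, the higher-integrability bound of Lemma \ref{VK2}). The restriction $0<\mu<4\le N$ keeps all relevant Sobolev exponents in the admissible range.

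Assembling these bounds and renormalizing $v_n = u_n/\|u_n\|_V$, the resulting inequalities force $\|v_n^{\pm}\|_V \to 0$, which contradicts $\|v_n\|_V = 1$. The main obstacle is managing the critical Hardy--Littlewood--Sobolev growth of $g$: naive HLS--Sobolev estimates on $|g(u_n)u_n^\pm|_{2N/(2N-\mu)}$ produce powers of $\|u_n\|_V$ too large to close the bound. Overcoming this requires exploiting simultaneously (i) the linear A--R control $\Phi(u_n) = O(\|u_n\|_V)$, which via the positive-definite Cauchy--Schwarz trick contributes a half-power saving $\sqrt{\Phi(u_n)}$; and (ii) the remarkable embedding $E^- \hookrightarrow L^\infty$ of Lemma \ref{VK1}, a feature of the strongly indefinite setting specific to $N \ge 4$ (which, as the authors note, is precisely what fails for $N=3$).
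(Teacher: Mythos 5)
Your outline correctly identifies the A–R inequality $\langle\Phi'(u_n),u_n\rangle\geq 2\theta\Phi(u_n)$ and the resulting bound $\langle\Phi'(u_n),u_n\rangle=O(1)+o(1)\|u_n\|_V$ as the starting point, and your Cauchy--Schwarz inequality for the positive-definite Riesz form
$B(f,h)=\int\int\frac{f(x)h(y)}{|x-y|^\mu}\,dxdy$ is correct. But the step where you control the remaining double integral $B(g(\cdot,u_n)\varphi,g(\cdot,u_n)\varphi)$ does not close, and this is where the proof falls apart. Applying HLS and then H\"older with $|g(\cdot,u_n)|_{2N/(N+2-\mu)}\lesssim\|u_n\|_V^{2_\mu^\ast-1}$ and $|\varphi|_{2^\ast}\lesssim\|\varphi\|_V$ yields
$B(gu_n^{\pm},gu_n^{\pm})\lesssim\|u_n\|_V^{2(2_\mu^\ast-1)}\|u_n^{\pm}\|_V^2$, so that even after the half-power saving from $\Phi(u_n)^{1/2}=O(\|u_n\|_V^{1/2})$ you obtain only
$|\langle\Phi'(u_n),u_n^{\pm}\rangle|\lesssim\|u_n\|_V^{2_\mu^\ast-\frac{1}{2}}\|u_n^{\pm}\|_V$. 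Since $0<\mu<4\leq N$ forces $2_\mu^\ast=\frac{2N-\mu}{N-2}>2$, the exponent $2_\mu^\ast-\frac12>\frac32>1$, so this inequality is weaker than the trivial $\|u_n^{\pm}\|_V\leq\|u_n\|_V$ and no contradiction is produced when you renormalize. The $L^\infty$ embedding of Lemma \ref{VK1} for $u_n^-$ does not rescue this either, because the bottleneck is the Lebesgue norm of $g(\cdot,u_n)$ itself, not of $u_n^-$.

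The paper's actual proof avoids estimating any Lebesgue norm of $g(\cdot,u_n)$. It splits $g=g_1+g_2$ via a cutoff in $|u|$, so that $g_1$ carries the critical growth $|u|^{2_\mu^\ast-1}$ and $g_2$ the subcritical $|u|^{q-1}$, and exploits the pointwise inequalities $|g_1|^{\kappa}\leq Cg_1u$ (with $\kappa=\frac{2_\mu^\ast}{2_\mu^\ast-1}$) and $|g_2|^{q'}\leq Cg_2u$, both consequences of $(K_2)$ and $(K_3)$. It then applies H\"older in $x$ with respect to the weight $w(x)=\int\frac{G(y,u_n)}{|x-y|^\mu}dy$ with conjugate exponents $\kappa,2_\mu^\ast$, which converts the $g_1$ factor into $\int w\,g_1u_n\,dx\lesssim\langle\Phi'(u_n),u_n\rangle$, and uses the semigroup property of the Riesz potential (where your Cauchy--Schwarz idea properly belongs) only on the remaining factor $\int w|\varphi|^{2_\mu^\ast}dx$. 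The net effect is
$\|\Phi'(u_n)\|_{H^{-1}}\lesssim 1+\langle\Phi'(u_n),u_n\rangle^{\alpha}$ with $\alpha\in(\tfrac12,1)$, hence $\|\Phi'(u_n)\|_{H^{-1}}\lesssim 1+\frac{\|u_n\|_V}{n}$, and testing $J_K'(u_n)$ against $u_n^{\pm}$ gives boundedness by elementary absorption. The essential idea you are missing is to never bound $|g(\cdot,u_n)|$ in an $L^p$-norm but instead pass, via the weighted H\"older and the pointwise inequalities above, to $\langle\Phi'(u_n),u_n\rangle$ raised to a power strictly less than one.
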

\begin{proof}
Let $\{u_{n}\}$ be the $(PS)_{c^{\star}}$ sequence obtained in Remark \ref{VK4}. By Remark \ref{AR1} we have
\begin{equation}\label{D3}
{C_{0}(1+\frac{\|u_{n}\|_{V}}{n})\geq \int_{\mathbb{R}^N}
\int_{\mathbb{R}^N}\frac{G(y,u_n)g(x,u_n)u_n(x)}
{|x-y|^{\mu}}dxdy}
\end{equation}
for almost all $n$.

 Note that $|g(x,u)|\leq C(|u|^{q-1}+|u|^{2_{\mu}^{\ast}-1})$.
Let $\chi\in C^{\infty}(\mathbb{R},\mathbb{R})$ such that $\chi(t)=1$ if $|t|\geq2$ whereas $\chi(t)=0$ if $|t|\leq1$. We define
$$
g_{1}(x,u):=\chi(u)g(x,u), \ \ \ g_{2}(x,u):=g(x,u)-g_{1}(x,u),
$$
$$
\kappa:=\frac{2_{\mu}^{\ast}}{2_{\mu}^{\ast}-1}, \ \ \ q':=\frac{q}{q-1}.
$$
As a consequence of assumption $(K_{2})$, we have, for some $C_1>0$,
\begin{equation}\label{D1}
|g_{1}(x,u)|\leq C_1|u|^{2_{\mu}^{\ast}-1} \ \ \mbox{and} \ \
|g_{2}(x,u)|\leq C_1|u|^{q-1},
\end{equation}
then we deduce from assumption $(K_3)$ that
$$
|g_{1}(x,u)|^{\kappa}\leq C_1g_{1}(x,u)u \ \ \mbox{and} \ \
|g_{2}(x,u)|^{q'}\leq C_1g_{2}(x,u)u.
$$
Taking this fact and using the H\"{o}lder inequality, we know
\begin{equation}\label{D1p}\aligned
&\left|\int_{\mathbb{R}^N}\int_{\mathbb{R}^N}\frac{G(y,u_n)g_1(x,u_n)\varphi(x)}
{|x-y|^{\mu}}dxdy\right|\\
&\hspace{7mm}\leq \left(\int_{\mathbb{R}^N}\int_{\mathbb{R}^N}\frac{G(y,u_n)}
{|x-y|^{\mu}}dy|g_1(x,u_n)|^{\kappa}dx\right)^{\frac{1}{\kappa}} \left(\int_{\mathbb{R}^N}\int_{\mathbb{R}^N}\frac{G(y,u_n)}
{|x-y|^{\mu}}dy|\varphi(x)|^{2_{\mu}^{\ast}}dx\right)^{\frac{1}{2_{\mu}^{\ast}}}\\
&\hspace{7mm}\leq C_2\left(\int_{\mathbb{R}^N}\int_{\mathbb{R}^N}\frac{G(y,u_n)}
{|x-y|^{\mu}}dy|g(x,u_n)u_n(x)|dx\right)^{\frac{1}{\kappa}} \left(\int_{\mathbb{R}^N}\int_{\mathbb{R}^N}\frac{G(y,u_n)}
{|x-y|^{\mu}}dy|\varphi(x)|^{2_{\mu}^{\ast}}dx\right)^{\frac{1}{2_{\mu}^{\ast}}}.
\endaligned
\end{equation}
By the semigroup property of the Riesz potential and the H\"{o}lder inequality,
$$\aligned
\int_{\mathbb{R}^N}&\int_{\mathbb{R}^N}\frac{G(y,u_n)}
{|x-y|^{\mu}}dy|\varphi(x)|^{2_{\mu}^{\ast}}dx\\
&=\int_{\mathbb{R}^N}\left(\int_{\mathbb{R}^N}\frac{G(y,u_n)}
{|x-y|^{\frac{N+\mu}{2}}}dy\right)\left(\int_{\mathbb{R}^N}\frac{|\varphi(y)|^{2_{\mu}^{\ast}}}
{|x-y|^{\frac{N+\mu}{2}}}dy\right)dx\\
&\leq\left(\int_{\mathbb{R}^N}\left(\int_{\mathbb{R}^N}\frac{G(y,u_n)}
{|x-y|^{\frac{N+\mu}{2}}}dy\right)^{2}dx\right)^{\frac{1}{2}}\left(\int_{\mathbb{R}^N}\left(\int_{\mathbb{R}^N}\frac{|\varphi(y)|^{2_{\mu}^{\ast}}}
{|x-y|^{\frac{N+\mu}{2}}}dy\right)^{2}dx\right)^{\frac{1}{2}}\\
&=\left(\int_{\mathbb{R}^N}\int_{\mathbb{R}^N}\frac{G(x,u_n)G(y,u_n)}
{|x-y|^{\mu}}dxdy\right)^{\frac{1}{2}}\left(\int_{\mathbb{R}^N}\int_{\mathbb{R}^N}\frac{|\varphi(x)|^{2_{\mu}^{\ast}}|\varphi(y)|^{2_{\mu}^{\ast}}}
{|x-y|^{\mu}}dxdy\right)^{\frac{1}{2}}\\
&\leq C_3\left(\int_{\mathbb{R}^N}\int_{\mathbb{R}^N}\frac{G(y,u_n)g(x,u_n)u_n(x)}
{|x-y|^{\mu}}dxdy\right)^{\frac{1}{2}}\left(\int_{\mathbb{R}^N}\int_{\mathbb{R}^N}\frac{|\varphi(x)|^{2_{\mu}^{\ast}}|\varphi(y)|^{2_{\mu}^{\ast}}}
{|x-y|^{\mu}}dxdy\right)^{\frac{1}{2}} .
\endaligned
$$
Thus, by \eqref{D1p} we have
$$\aligned
&\left|\int_{\mathbb{R}^N}\int_{\mathbb{R}^N}\frac{G(y,u_n)g_1(x,u_n)\varphi(x)}
{|x-y|^{\mu}}dxdy\right|\\
&\hspace{5mm}\leq C_4\left(\int_{\mathbb{R}^N}\int_{\mathbb{R}^N}\frac{G(y,u_n)g(x,u_n)u_n(x)}
{|x-y|^{\mu}}dxdy\right)^{\frac{1}{\kappa}+\frac{1}{2\cdot 2_{\mu}^{\ast}}}\left(\int_{\mathbb{R}^N}\int_{\mathbb{R}^N}\frac{|\varphi(x)|^{2_{\mu}^{\ast}}|\varphi(y)|^{2_{\mu}^{\ast}}}
{|x-y|^{\mu}}dxdy\right)^{\frac{1}{2\cdot2_{\mu}^{\ast}}}
\endaligned
$$
and therefore
$$
\left|\int_{\mathbb{R}^N}\int_{\mathbb{R}^N}\frac{G(y,u_n)g_1(x,u_n)\varphi(x)}
{|x-y|^{\mu}}dxdy\right|\leq C_5\left(\langle \Phi'(u_n),u_n\rangle\right)^{\frac{1}{\kappa}+\frac{1}{2\cdot 2_{\mu}^{\ast}}}\|\varphi\|_V.
$$
Similarly, we know
$$
\left|\int_{\mathbb{R}^N}\int_{\mathbb{R}^N}\frac{G(y,u_n)g_2(x,u_n)\varphi(x)}
{|x-y|^{\mu}}dxdy\right|\leq C_6\left(\langle \Phi'(u_n),u_n\rangle\right)^{\frac{1}{q'}+\frac{1}{2q}}\|\varphi\|_V.
$$
Recall that
$$
\left|\langle \Phi'(u_n),\varphi\rangle\right|\leq \left|\int_{\mathbb{R}^N}\int_{\mathbb{R}^N}\frac{G(y,u_n)g_1(x,u_n)\varphi(x)}
{|x-y|^{\mu}}dxdy\right|+\left|\int_{\mathbb{R}^N}\int_{\mathbb{R}^N}\frac{G(y,u_n)g_2(x,u_n)\varphi(x)}
{|x-y|^{\mu}}dxdy\right|,
$$
since $\frac{1}{q'}+\frac{1}{2q}, \frac{1}{\kappa}+\frac{1}{2\cdot 2_{\mu}^{\ast}}\in (\frac12,1)$, we know
\begin{equation}\label{D4}
\|\Phi'(u_{n})\|_{H^{-1}(\mathbb{R}^N)}\leq C_{7}(1+\langle \Phi'(u_n),u_n\rangle).
\end{equation}
Then, by \eqref{D3} and \eqref{D4}, we have
\begin{equation}\label{D5}
\|\Phi'(u_{n})\|_{H^{-1}(\mathbb{R}^N)}\leq C_{8}(1+\frac{\|u_{n}\|_{V}}{n}),
\end{equation}
where $H^{-1}(\mathbb{R}^N)$ denotes the dual space of $E$. Using this fact, for large $n$, we have
$$\aligned
\|u_{n}^{+}\|_{V}^{2}&=\langle J_{K}'(u_{n}),u_{n}^{+}\rangle+\frac{1}{2_{\mu}^{\ast}}\int_{\mathbb{R}^N}
\int_{\mathbb{R}^N}\frac{G(y,u_n)g(x,u_n)u_{n}^{+}(x)}
{|x-y|^{\mu}}dxdy\\
&\leq \|u_{n}^{+}\|_{V}+C_{9}(1+\frac{\|u_{n}\|_{V}}{n})\|u_{n}^{+}\|_{V}
\endaligned$$
and
$$\aligned
\|u_{n}^{-}\|_{V}^{2}&=-\langle J_{K}'(u_{n}),u_{n}^{-}\rangle-\frac{1}{2_{\mu}^{\ast}}\int_{\mathbb{R}^N}
\int_{\mathbb{R}^N}\frac{G(y,u_n)g(x,u_n)u_{n}^{-}(x)}
{|x-y|^{\mu}}dxdy\\
&\leq \|u_{n}^{-}\|_{V}+C_{10}(1+\frac{\|u_{n}\|_{V}}{n})\|u_{n}^{-}\|_{V}.
\endaligned$$
Consequently,
$$\aligned
\|u_{n}\|_{V}^{2}\leq \|u_{n}\|_{V}+C_{11}(1+\frac{\|u_{n}\|_{V}}{n})\|u_{n}\|_{V},
\endaligned$$
this implies that the sequence $\{u_{n}\}$ is bounded in $E$.
\end{proof}

As \cite{GY}, let $S_{H,L}$ be the best constant defined as
\begin{equation}\label{S1}
S_{H,L}:=\displaystyle\inf\limits_{u\in D^{1,2}(\mathbb{R}^N)\backslash\{{0}\}}\ \ \frac{\displaystyle\int_{\mathbb{R}^N}|\nabla u|^{2}dx}{\Big(\displaystyle\int_{\mathbb{R}^N}\int_{\mathbb{R}^N}
\frac{|u(x)|^{2_{\mu}^{\ast}}|u(y)|^{2_{\mu}^{\ast}}}{|x-y|^{\mu}}dxdy\Big)^{\frac{N-2}{2N-\mu}}}.
\end{equation}

\begin{lem}\label{ExFu} (\cite{GY})
The constant $S_{H,L}$ defined in \eqref{S1} is achieved if and only if $$u=C\left(\frac{b}{b^{2}+|x-a|^{2}}\right)^{\frac{N-2}{2}} ,$$ where $C>0$ is a fixed constant, $a\in \mathbb{R}^{N}$ and $b\in(0,\infty)$ are parameters. What's more,
$$
S_{H,L}=\frac{S}{C(N,\mu)^{\frac{N-2}{2N-\mu}}},
$$
where $S$ is the best Sobolev constant.
\end{lem}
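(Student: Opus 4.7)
The plan is to bound the Choquard-type quotient from below by the classical Sobolev quotient (via the Hardy-Littlewood-Sobolev inequality) and then exhibit an extremizer realized simultaneously by the Aubin-Talenti family, which makes both bounds sharp at the same time.

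First I would apply Proposition \ref{HLS} with $f=h=|u|^{2_{\mu}^{\ast}}$ and the self-dual exponent $t=r=\frac{2N}{2N-\mu}$. Since $2_{\mu}^{\ast}\cdot\frac{2N}{2N-\mu}=\frac{2N}{N-2}=2^{\ast}$, the inequality reads
$$
\int_{\R^N}\!\!\int_{\R^N}\frac{|u(x)|^{2_{\mu}^{\ast}}|u(y)|^{2_{\mu}^{\ast}}}{|x-y|^{\mu}}\,dx\,dy\;\leq\;C(N,\mu)\,|u|_{2^{\ast}}^{\,2\cdot 2_{\mu}^{\ast}}.
$$
Raising both sides to the power $\frac{N-2}{2N-\mu}$ and using the exponent identity $2\cdot 2_{\mu}^{\ast}\cdot\frac{N-2}{2N-\mu}=2$, I obtain
$$
\Bigl(\int\!\!\int\frac{|u(x)|^{2_{\mu}^{\ast}}|u(y)|^{2_{\mu}^{\ast}}}{|x-y|^{\mu}}\,dx\,dy\Bigr)^{\!\frac{N-2}{2N-\mu}}\!\leq C(N,\mu)^{\frac{N-2}{2N-\mu}}|u|_{2^{\ast}}^{2}.
$$
Dividing $\int|\nabla u|^{2}$ by this and using the standard Sobolev inequality $\int|\nabla u|^{2}\geq S\,|u|_{2^{\ast}}^{2}$ immediately yields
$$
S_{H,L}\;\geq\;\frac{S}{C(N,\mu)^{\frac{N-2}{2N-\mu}}}.
$$

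For the reverse inequality and the characterization of minimizers, I would plug into the quotient the Aubin-Talenti family $U_{a,b}(x)=\bigl(\tfrac{b}{b^{2}+|x-a|^{2}}\bigr)^{(N-2)/2}$. These functions are well known to achieve the Sobolev constant $S$. Moreover, the rule for equality in Proposition \ref{HLS} requires $|u|^{2_{\mu}^{\ast}}$ to have the shape $A(\gamma^{2}+|x-a|^{2})^{-(2N-\mu)/2}$; solving $|u|^{(2N-\mu)/(N-2)}\propto (\gamma^{2}+|x-a|^{2})^{-(2N-\mu)/2}$ forces exactly $|u|\propto (\gamma^{2}+|x-a|^{2})^{-(N-2)/2}$, i.e.\ a member of the Aubin-Talenti family (up to scaling and translation). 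Hence both the Sobolev and HLS inequalities are saturated simultaneously by $U_{a,b}$, giving equality in the two-step estimate above. This shows $S_{H,L}=S/C(N,\mu)^{(N-2)/(2N-\mu)}$, that $U_{a,b}$ (and only its constant multiples/translations/dilations) achieves the infimum, and completes the characterization.

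The step that needs care is the matching of equality cases: one must verify that the extremals of HLS, after taking the $(N-2)/(2N-\mu)$-th root through $|u|^{2_{\mu}^{\ast}}$, coincide (up to the three free parameters $A,\gamma,a$) with the Sobolev extremals, rather than merely being contained in them. Since the algebraic identity $(2N-\mu)/(N-2)\cdot (N-2)/2=(2N-\mu)/2$ lines the exponents up exactly, the two extremal families match perfectly, so no minimizer is lost. The constant replacement $S\mapsto S/C(N,\mu)^{(N-2)/(2N-\mu)}$ then follows by evaluating the quotient on any such extremal, e.g.\ $U_{0,1}$.
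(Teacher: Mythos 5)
Your proposal is correct and follows essentially the same two-step argument as the paper: the Hardy--Littlewood--Sobolev inequality gives the lower bound $S_{H,L}\geq S/C(N,\mu)^{\frac{N-2}{2N-\mu}}$, and the upper bound together with the characterization of minimizers follows from the observation that the HLS extremals $|u|^{2_\mu^*}\propto(\gamma^2+|x-a|^2)^{-(2N-\mu)/2}$ and the Sobolev extremals $|u|\propto(\gamma^2+|x-a|^2)^{-(N-2)/2}$ are the same one-parameter family, so both inequalities are saturated simultaneously by the Aubin--Talenti bubbles. Your remark about matching the equality cases is exactly the point the paper relies on implicitly when it writes "the function $u=C(b/(b^2+|x-a|^2))^{(N-2)/2}$ is a minimizer for $S$."
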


\begin{proof}
We sketch the proof here for For the completeness. On one hand, by the Hardy-Littlewood-Sobolev inequality, we know
$$\aligned
S_{H,L}\geq\frac{1}{C(N,\mu)^{\frac{N-2}{2N-\mu}}}\inf\limits_{u\in D^{1,2}(\mathbb{R}^N)\backslash\{{0}\}}\ \ \frac{\displaystyle\int_{\mathbb{R}^N}|\nabla u|^{2}dx}{|u|_{2^{\ast}}^{2}}
=\frac{S}{C(N,\mu)^{\frac{N-2}{2N-\mu}}},
\endaligned
$$
where $S$ is the best Sobolev constant.

On the other hand, notice that the equality in the Hardy-Littlewood-Sobolev inequality holds if and only if
$$
h(x)=C\left(\frac{b}{b^{2}+|x-a|^{2}}\right)^{\frac{2N-\mu}{2}},
 $$
where $C>0$ is a fixed constant, $a\in \mathbb{R}^{N}$ and $b\in(0,\infty)$ are parameters. Thus
$$
\Big(\int_{\mathbb{R}^N}\int_{\mathbb{R}^N}\frac{|u(x)|^{2_{\mu}^{\ast}}|u(y)|^{2_{\mu}^{\ast}}}{|x-y|^{\mu}}dxdy\Big)^{\frac{N-2}{2N-\mu}}= C(N,\mu)^{\frac{N-2}{2N-\mu}}|u|_{2^{\ast}}^{2}
$$
if and only if
$$
u=C\left(\frac{b}{b^{2}+|x-a|^{2}}\right)^{\frac{N-2}{2}}.
 $$
Then, by the definition of
$S_{H,L}$, for this $u=C\left(\frac{b}{b^{2}+|x-a|^{2}}\right)^{\frac{N-2}{2}}$, we know
$$
S_{H,L}\leq \frac{\displaystyle\int_{\mathbb{R}^N}|\nabla u|^{2}dx}{\Big(\displaystyle\int_{\mathbb{R}^N}\int_{\mathbb{R}^N}\frac{|u(x)|^{2_{\mu}^{\ast}}
|u(y)|^{2_{\mu}^{\ast}}}{|x-y|^{\mu}}dxdy\Big)^{\frac{N-2}{2N-\mu}}}=\frac{1}{C(N,\mu)^{\frac{N-2}{2N-\mu}}}\frac{\displaystyle\int_{\mathbb{R}^N}|\nabla u|^{2}dx}{|u|_{2^{\ast}}^{2}}.
$$
It is well-known that the function $u=C\left(\frac{b}{b^{2}+|x-a|^{2}}\right)^{\frac{N-2}{2}} $ is a minimizer for $S$, thus we get
$$
S_{H,L}\leq \frac{S}{C(N,\mu)^{\frac{N-2}{2N-\mu}}}.
$$
From the arguments above, we know that $S_{H,L}$ is achieved if and only if $u=C\left(\frac{b}{b^{2}+|x-a|^{2}}\right)^{\frac{N-2}{2}} $ and$$S_{H,L}=\frac{S}{C(N,\mu)^{\frac{N-2}{2N-\mu}}}.$$
Let $U(x):=\frac{[N(N-2)]^{\frac{N-2}{4}}}{(1+|x|^{2})^{\frac{N-2}{2}}}$ be a minimizer for $S$, see \cite{Wi} for example, direct calculation shows that
\begin{equation}\label{REL}
\aligned
\tilde{U}(x)=S^{\frac{(N-\mu)(2-N)}{4(N-\mu+2)}}C(N,\mu)^{\frac{2-N}{2(N-\mu+2)}}\frac{[N(N-2)]^{\frac{N-2}{4}}}{(1+|x|^{2})^{\frac{N-2}{2}}}
\endaligned
\end{equation}
is the unique  minimizer for $S_{H,L}$ that satisfies
$$
-\Delta u=\Big(\int_{\R^N}\frac{|u(y)|^{2_{\mu}^{\ast}}}{|x-y|^{\mu}}dy\Big)|u|^{2_{\mu}^{\ast}-2}u\ \ \   \hbox{in}\ \ \ \R^N.
$$
\end{proof}

\begin{lem} \label{VK6}
If $0<\mu<4$ and
\begin{equation}\label{D8}
0<c^{\star}<c_K:=\frac{N+2-\mu}{4N-2\mu}|K|_{\infty}^{-\frac{2N-4}{N-\mu+2}}S_{H,L}^{\frac{2N-\mu}{N-\mu+2}},
\end{equation}
then the $(PS)_{c^{\star}}$ sequence $\{u_{n}\}$ can not be vanishing: there exist $r,\eta>0$ and a sequence $\{y_{n}\}\in \R^{N}$ such that
$$
\lim_{n\rightarrow\infty}\sup\int_{B(y_{n},r)}u_{n}^{2}dx\geq\eta,
$$
where $B(x,r)$ denotes the open ball centered at $x$ with radius $r$.
\end{lem}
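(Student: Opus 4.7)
The plan is by contradiction: assume that the conclusion fails, i.e.\ vanishing holds: $\sup_{y\in\R^N}\int_{B(y,r)}u_n^2\,dx\to 0$ for every $r>0$. By Lions' concentration-compactness lemma, this forces $u_n\to 0$ in $L^s(\R^N)$ for every $s\in(2,2^{\ast})$. Crucially, the hypothesis $\mu<4$ places $2_\mu^{\ast}=(2N-\mu)/(N-2)$ strictly inside $(2,2^{\ast})$, so in particular $|u_n|_{2_\mu^{\ast}}\to 0$; this will be the engine for killing every subcritical contribution.

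\textbf{Step 1} (kill subcritical terms, identify the mass). Using HLS with exponent $t=2N/(2N-\mu)$ and the subcritical bound in $(K_2)$, every piece of $\Phi(u_n)$ and $\langle\Phi'(u_n),\varphi\rangle$ containing $F$ or $f$ tends to zero (after the $t$-rescaling the growth exponents $q,p\in((2N-\mu)/N,2_\mu^{\ast})$ fall into $(2,2^{\ast})$, hence $|F(\cdot,u_n)|_t\to 0$). Setting
$$A_n:=\int_{\R^N}\int_{\R^N}\frac{K(x)K(y)|u_n(x)|^{2_\mu^{\ast}}|u_n(y)|^{2_\mu^{\ast}}}{|x-y|^{\mu}}\,dx\,dy,$$
only this pure critical piece survives asymptotically. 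Combining $J_K(u_n)\to c^{\star}$ with $\langle J_K'(u_n),u_n\rangle\to 0$ (valid by boundedness from Lemma \ref{VK5}) gives
$$A_n=\frac{4N-2\mu}{N-\mu+2}\,c^{\star}+o(1).$$

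\textbf{Step 2} (tame the negative part). Testing $J_K'(u_n)\to 0$ against $u_n^-$ yields $\|u_n^-\|_V^2=-A_n^-+o(1)$, where $A_n^-$ is the analogue of $A_n$ with one factor of $u_n$ replaced by $u_n^-$. Applying HLS to $A_n^-$ with exponent $t$ slightly below $2N/(2N-\mu)$ (admissible precisely because $\mu<4$ places $2_\mu^{\ast}t$ in $(2,2^{\ast})$) extracts a vanishing factor $|u_n|_{2_\mu^{\ast}t}^{2_\mu^{\ast}}\to 0$, while the conjugate H\"older factor is controlled by $|u_n|_{2^{\ast}}^{2_\mu^{\ast}-1}|u_n^-|_s$ for a finite $s$. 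The $E^-$-regularity $|u_n^-|_{\infty}\leq c_0|u_n^-|_2$ from Lemma \ref{VK1} interpolated with $L^2$ gives $|u_n^-|_s\leq C|u_n^-|_2$; this is precisely the ingredient coming from $N\geq 4$ that breaks down in dimension $N=3$ (cf.\ Remark \ref{AR4}, where Lemma \ref{VK2} plays the analogous role). Consequently $|A_n^-|=o(1)$, hence $\|u_n^-\|_V\to 0$, and therefore $|\nabla u_n|_2^2\leq\|u_n\|_V^2=A_n+o(1)$.

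\textbf{Step 3} (Brezis--Nirenberg closure). By $K\leq|K|_{\infty}$ and the definition of $S_{H,L}$,
$$A_n\leq|K|_{\infty}^{2}\int_{\R^N}\int_{\R^N}\frac{|u_n(x)|^{2_\mu^{\ast}}|u_n(y)|^{2_\mu^{\ast}}}{|x-y|^{\mu}}\,dx\,dy\leq|K|_{\infty}^{2}\,S_{H,L}^{-2_\mu^{\ast}}\,(A_n+o(1))^{2_\mu^{\ast}}.$$
Passing to the limit (with $A:=\lim A_n>0$ since $c^{\star}\geq\varpi>0$) and using $2_\mu^{\ast}-1=(N-\mu+2)/(N-2)$, rearrangement yields
$$c^{\star}\geq\frac{N-\mu+2}{4N-2\mu}\,|K|_{\infty}^{-(2N-4)/(N-\mu+2)}\,S_{H,L}^{(2N-\mu)/(N-\mu+2)}=c_K,$$
contradicting $c^{\star}<c_K$. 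The main obstacle is Step 2: in the positive-definite setting $u_n^-$ is absent and the Brezis-Nirenberg scheme applies directly, but here the negative spectrum of $-\Delta+V$ requires combining the improved $E^-$-regularity from Lemmas \ref{VK1}-\ref{VK2} with the strict vanishing of $|u_n|_{2_\mu^{\ast}}$ to prevent $u_n^-$ from carrying a macroscopic share of the gradient energy.
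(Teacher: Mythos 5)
Your proposal is correct and follows essentially the same route as the paper's proof: argue by contradiction from vanishing, use Lions' lemma together with the Hardy--Littlewood--Sobolev inequality and the subcritical range of $p,q$ to kill every term containing $F$ or $f$, use the H\"older/HLS splitting plus the $E^-$-regularity (Lemmas~\ref{VK1}--\ref{VK2}) and the $L^r$-vanishing with $r\in(2,2^{\ast})$ to show $\|u_n^-\|_V\to 0$, and finally close with the $S_{H,L}$-inequality and the identity $A_n=\frac{4N-2\mu}{N-\mu+2}c^{\star}+o(1)$ to force $c^{\star}\geq c_K$. The only differences from the paper (which uses Lemma~\ref{VK2} directly rather than Lemma~\ref{VK1} plus interpolation, and chooses the H\"older exponents so the vanishing factor appears as $|u_n|_r^{(N-\mu+2)/(N-2)}$ rather than as $|u_n|_{2_\mu^{\ast}t}^{2_\mu^{\ast}}$) are cosmetic.
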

\begin{proof}
We argue by contradiction. If $\{u_{n}\}$ is vanishing, then Compactness Lemma [Lemma 1.21, \cite{Wi}] implies that
$$
u_{n}\rightarrow0 \ \ \mbox{in} \ \ L^{r}(\mathbb{R}^N),
$$
where $2<r<2^{\ast}$. By choosing $t,s$ close to $\frac{2N}{2N-\mu}$ such that
$$
1/t+\mu/N+1/s=2,
$$
and applying the Hardy-Littlewood-Sobolev inequality again, we know
$$
\Big|\int_{\mathbb{R}^N}
\int_{\mathbb{R}^N}\frac{|u_{n}(y)|^{2_{\mu}^{\ast}}f(x,u_{n})u_{n}(x)}
{|x-y|^{\mu}}dxdy\Big|\leq C|u_n|^{2_{\mu}^{\ast}}_{t\cdot2_{\mu}^{\ast}}(|u_n|^{p}_{sp}+|u_n|^{q}_{sq}).
$$
Thus, we can get
\begin{equation}\label{D9}
\int_{\mathbb{R}^N}
\int_{\mathbb{R}^N}\frac{|u_{n}(y)|^{2_{\mu}^{\ast}}f(x,u_{n})u_{n}(x)}
{|x-y|^{\mu}}dxdy \rightarrow0,  \ \  \ \ \int_{\mathbb{R}^N}
\int_{\mathbb{R}^N}\frac{|u_{n}(y)|^{2_{\mu}^{\ast}}F(x,u_{n})}
{|x-y|^{\mu}}dxdy\rightarrow0,
\end{equation}
as $n\rightarrow+\infty$. Similarly, we have
\begin{equation}\label{D91}
\int_{\mathbb{R}^N}
\int_{\mathbb{R}^N}\frac{F(y,u_n)f(x,u_{n})u_{n}(x)}
{|x-y|^{\mu}}dxdy \rightarrow0,  \ \  \ \ \int_{\mathbb{R}^N}
\int_{\mathbb{R}^N}\frac{F(y,u_n)F(x,u_{n})}
{|x-y|^{\mu}}dxdy\rightarrow0,
\end{equation}
as $n\rightarrow+\infty$. Then we can conclude that
\begin{equation}\label{D10}
J_{K}(u_{n})-\frac{1}{2}\langle J'_{K}(u_{n}),u_{n}\rangle=(\frac{1}{2}-\frac{1}{2\cdot2_{\mu}^{\ast}})\int_{\mathbb{R}^N}
\int_{\mathbb{R}^N}\frac{K(x)|u_{n}(x)|^{2_{\mu}^{\ast}}K(y)|u_{n}(y)|^{2_{\mu}^{\ast}}}
{|x-y|^{\mu}}dxdy+o_{n}(1)\rightarrow c^{\star},
\end{equation}
as $n\rightarrow+\infty$. By Lemma \ref{VK2}, there exists $C_{1}>0$ such that $|u_{n}^{-}|_{s}\leq C_{1}|u_{n}^{-}|_{2}$, where $s=\frac{2N}{N-4}$ if $N>4$ and $s$ may be taken arbitrarily large if $N=4$. Then there exists $C_{2}>0$ such that $|u_{n}^{-}|_{s}\leq C_{2}\|u_{n}\|_{V}$. Noting that $\{u_{n}\}$ is bounded in $E$, so $|u_{n}^{-}|_{s}$ is also bounded. Let $r$ be such that $\frac{2^{\ast}\frac{N-\mu+2}{2N-\mu}}{r}+\frac{\frac{2N}{2N-\mu}}{s}=1$. Then, by $0<\mu<4$, we have $2<r<2^{\ast}$ (for $N=4$, $s$ needs to be larger than 4). By the Hardy-Littlewood-Sobolev inequality and the H\"{o}lder inequality, similar to the proof of \eqref{D9} and \eqref{D91}, we obtain from $\langle J_{K}'(u_{n}),u_{n}^{-}\rangle=o_{n}(1)$ that
$$\aligned
\|u_{n}^{-}\|_{V}^{2}&=-\int_{\mathbb{R}^N}
\int_{\mathbb{R}^N}\frac{K(x)|u_{n}(x)|^{2_{\mu}^{\ast}}K(y)|u_{n}(y)|^{2_{\mu}^{\ast}-2}u_{n}(y)u_{n}^{-}(y)}
{|x-y|^{\mu}}dxdy+o_{n}(1)\\
&\leq C_{3}|K|_{\infty}^{2}|u_{n}|_{2^{\ast}}^{2_{\mu}^{\ast}}\left(\int_{\mathbb{R}^N}
|u_{n}|^{2^{\ast}\frac{N-\mu+2}{2N-\mu}}|u_{n}^{-}|^{\frac{2N}{2N-\mu}}dx\right)^{\frac{2N-\mu}{2N}}+o_{n}(1)\\
&\leq C_{3}|K|_{\infty}^{2}|u_{n}|_{2^{\ast}}^{2_{\mu}^{\ast}}
|u_{n}|_{r}^{\frac{N-\mu+2}{N-2}}|u_{n}^{-}|_{s}+o_{n}(1)\\
&\rightarrow0,
\endaligned$$
as $n\rightarrow+\infty$. Then we know
\begin{equation}\label{D11}
\|u_{n}\|_{V}=\|u_{n}^{+}\|_{V}+o_{n}(1).
\end{equation}
Since $\{u_{n}\}$ is a Palais-Smale sequence for $J_{K}$ such that $J_{K}(u_{n})\rightarrow c^{\star}$, we obtain
\begin{equation}\label{D12}
c^{\star}=\frac{1}{2}\|u_{n}^{+}\|_{V}^{2}-\frac{1}{2\cdot2_{\mu}^{\ast}}\int_{\mathbb{R}^N}
\int_{\mathbb{R}^N}\frac{K(x)|u_{n}(x)|^{2_{\mu}^{\ast}}K(y)|u_{n}(y)|^{2_{\mu}^{\ast}}}
{|x-y|^{\mu}}dxdy+o_{n}(1)
\end{equation}
and
\begin{equation}\label{D13}
\|u_{n}^{+}\|_{V}^{2}=\int_{\mathbb{R}^N}
\int_{\mathbb{R}^N}\frac{K(x)|u_{n}(x)|^{2_{\mu}^{\ast}}K(y)|u_{n}(y)|^{2_{\mu}^{\ast}}}
{|x-y|^{\mu}}dxdy+o_{n}(1).
\end{equation}
Thus by \eqref{D11} and \eqref{D13} we get
\begin{equation}\label{D14}
\aligned
\|u_{n}\|_{V}^{2}&\leq |K|_{\infty}^{2}\int_{\mathbb{R}^N}
\int_{\mathbb{R}^N}\frac{|u_{n}(x)|^{2_{\mu}^{\ast}}|u_{n}(y)|^{2_{\mu}^{\ast}}}
{|x-y|^{\mu}}dxdy+o_{n}(1)\\
&\leq |K|_{\infty}^{2} S_{H,L}^{-2_{\mu}^{\ast}}\|u_{n}\|_{V}^{2\cdot2_{\mu}^{\ast}}+o_{n}(1).
\endaligned
\end{equation}
If $\|u_{n}\|_{V}\rightarrow0$ as $n\rightarrow+\infty$, it follows from \eqref{D11}, \eqref{D12} and \eqref{D13} that $c^{\star}=0$. This is a contradiction, since $c^{\star}>0$. Therefore $\|u_{n}\|_{V}\nrightarrow0$. So by \eqref{D14} we get $\|u_{n}\|_{V}\geq |K|_{\infty}^{-\frac{N-2}{N-\mu+2}} S_{H,L}^{\frac{2N-\mu}{2N-2\mu+4}}$. Then from \eqref{D11}, \eqref{D12} and \eqref{D13} we easily conclude that $c^{\star}\geq \frac{N-\mu+2}{4N-2\mu}|K|_{\infty}^{-\frac{2N-4}{N-\mu+2}}S_{H,L}^{\frac{2N-\mu}{N-\mu+2}}$. This contradicts with our assumption. Hence $\{u_{n}\}$ is non-vanishing.
\end{proof}

\section{\large  Proof of the Main Result}
In this section we will prove the existence of solutions for equation \eqref{CCE3}.
Consider a cut-off function $\psi\in C_{0}^{\infty}(\mathbb{R}^N)$ such that
$$\aligned
\psi(x)=1 \hspace{3.14mm} \mbox{if}\hspace{2.14mm} |x|\leq \delta,\ \ \psi(x)=0 \hspace{3.14mm} \mbox{if} \hspace{2.14mm}|x|\geq2 \delta
\endaligned
$$ for some $\delta>0$.
We define, for $\varepsilon>0$,
\begin{equation}\label{B17}
\aligned
U_{\varepsilon}(x)&:=\varepsilon^{\frac{2-N}{2}}U(\frac{x}{\varepsilon}),\\
u_{\varepsilon}(x)&:=\psi(x)U_{\varepsilon}(x).
\endaligned
\end{equation}
From \cite{BN}, \cite{GY} and Lemma 1.46 of \cite{Wi}, we know that
as $\varepsilon\rightarrow0^{+}$,
\begin{equation}\label{B18}
\int_{\mathbb{R}^N}|u_{\varepsilon}|^{2^{\ast}}dx=C(N,\mu)^{\frac{N-2}{2N-\mu}\cdot\frac{N}{2}}S_{H,L}^{\frac{N}{2}}+O(\varepsilon^{N}),
\end{equation}
\begin{equation}\label{B19}
\int_{\mathbb{R}^N}|\nabla u_{\varepsilon}|^{2}dx
=C(N,\mu)^{\frac{N-2}{2N-\mu}\cdot\frac{N}{2}}S_{H,L}^{\frac{N}{2}}+O(\varepsilon^{N-2}),
\end{equation}
\begin{equation}\label{B191}
\int_{\mathbb{R}^N}|\nabla u_{\varepsilon}|dx
=O(\varepsilon^{\frac{N-2}{2}}),\ \
\int_{\mathbb{R}^N}|u_{\varepsilon}|dx=O(\varepsilon^{\frac{N-2}{2}})
\end{equation}
and
\begin{equation}\label{B22}
\int_{\mathbb{R}^N}|u_{\varepsilon}|^{2}dx=\left\{\begin{array}{l}
\displaystyle d\varepsilon^{2}|\ln\varepsilon|+O(\varepsilon^{2}) \hspace{10.64mm} \mbox{if}\hspace{2.14mm} N=4,\\
\displaystyle d\varepsilon^{2}+O(\varepsilon^{N-2}) \hspace{13.14mm} \mbox{if} \hspace{2.14mm}N\geq5,\\
\end{array}
\right.
\end{equation}
where $d$ is a positive constant.

For the convolution part, we have

\begin{lem}\label{ECT}
\begin{equation}\label{B20}
\int_{\mathbb{R}^N}\int_{\mathbb{R}^N}\frac{|u_{\varepsilon}(x)|^{2_{\mu}^{\ast}}|u_{\varepsilon}(y)|^{2_{\mu}^{\ast}}}
{|x-y|^{\mu}}dxdy
\geq C(N,\mu)^{\frac{N}{2}}S_{H,L}^{\frac{2N-\mu}{2}}-O(\varepsilon^{N-\frac{\mu}{2}}).
\end{equation}
\end{lem}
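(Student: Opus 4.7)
The strategy is to reduce the integral for $u_\varepsilon$ to that of the uncut instanton $U_\varepsilon$, evaluate the latter exactly by scale invariance and the equality case of the Hardy--Littlewood--Sobolev inequality, and then show the cutoff discrepancy is at most $O(\varepsilon^{N-\mu/2})$. Since $0 \leq u_\varepsilon \leq U_\varepsilon$ pointwise and $u_\varepsilon = U_\varepsilon$ on $B(0,\delta)$, it suffices to establish that
\[
\int_{\R^N}\int_{\R^N}\frac{|U_\varepsilon(x)|^{2_{\mu}^{\ast}}|U_\varepsilon(y)|^{2_{\mu}^{\ast}}}{|x-y|^{\mu}}\,dxdy = C(N,\mu)^{N/2} S_{H,L}^{(2N-\mu)/2}
\]
and that the difference between this integral and the analogous one for $u_\varepsilon$ is $O(\varepsilon^{N-\mu/2})$.

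For the identity, substituting $x = \varepsilon\tilde x$, $y = \varepsilon\tilde y$ into $U_\varepsilon(x) = \varepsilon^{(2-N)/2}U(x/\varepsilon)$ shows that the powers of $\varepsilon$ coming from the Jacobian $\varepsilon^{2N}$, from $|U_\varepsilon|^{2\cdot 2_{\mu}^{\ast}}$ which contributes $\varepsilon^{-(2N-\mu)}$, and from $|x-y|^{-\mu}$ which contributes $\varepsilon^{-\mu}$, cancel exactly; hence the double integral is scale invariant and equals its analogue for $U$. Since $|U|^{2_{\mu}^{\ast}}(x)$ is proportional to $(1+|x|^2)^{-(2N-\mu)/2}$, it is an extremal for Proposition \ref{HLS}, so the integral equals $C(N,\mu)|U|_{2^{\ast}}^{2\cdot 2_{\mu}^{\ast}}$. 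Using the standard identity $|U|_{2^{\ast}}^{2^{\ast}} = S^{N/2}$ for the Aubin--Talenti bubble gives $|U|_{2^{\ast}}^{2\cdot 2_{\mu}^{\ast}} = S^{(2N-\mu)/2}$, and combining with $S_{H,L} = S/C(N,\mu)^{(N-2)/(2N-\mu)}$ from Lemma \ref{ExFu} collects the factors of $C(N,\mu)$ into $C(N,\mu)^{N/2}$, yielding the claimed leading constant.

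For the cutoff error, I would apply the pointwise inequality $a_1a_2 - b_1b_2 \leq a_1(a_2-b_2) + (a_1-b_1)a_2$ with $a_i = |U_\varepsilon|^{2_{\mu}^{\ast}}$ and $b_i = |u_\varepsilon|^{2_{\mu}^{\ast}}$, noting that $a_i - b_i$ vanishes on $B(0,\delta)$. By symmetry it then suffices to bound
\[
\int_{\R^N}\int_{|y|\geq\delta}\frac{|U_\varepsilon(x)|^{2_{\mu}^{\ast}}|U_\varepsilon(y)|^{2_{\mu}^{\ast}}}{|x-y|^{\mu}}\,dxdy,
\]
and Proposition \ref{HLS} applied with the localized factor bounds this by $C(N,\mu)|U_\varepsilon|_{2^{\ast}}^{2_{\mu}^{\ast}}\bigl(\int_{|y|\geq\delta}|U_\varepsilon|^{2^{\ast}}dy\bigr)^{(2N-\mu)/(2N)}$. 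The tail estimate $\int_{|y|\geq\delta}|U_\varepsilon|^{2^{\ast}}dy = O(\varepsilon^N)$, obtained by rescaling $y = \varepsilon\tilde y$ and integrating $(1+|\tilde y|^2)^{-N}$ over $|\tilde y|\geq\delta/\varepsilon$, together with the scale-invariance of $|U_\varepsilon|_{2^{\ast}}$, produces the desired bound via the arithmetic $N(2N-\mu)/(2N) = N - \mu/2$. The main technical point is keeping track of the powers of $\varepsilon$ when combining HLS with the rescaling; once the scale invariance and HLS saturation are in hand, the rest is a bookkeeping exercise.
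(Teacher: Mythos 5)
Your proposal is correct and follows essentially the same route as the paper: isolate the exact bubble integral $\int\int |U_{\varepsilon}(x)|^{2_{\mu}^{\ast}}|U_{\varepsilon}(y)|^{2_{\mu}^{\ast}}|x-y|^{-\mu}\,dxdy = C(N,\mu)^{N/2}S_{H,L}^{(2N-\mu)/2}$ (scale invariance plus the HLS equality case, which the paper simply asserts and you verify via $|U|_{2^{\ast}}^{2^{\ast}}=S^{N/2}$ and Lemma \ref{ExFu}), and control the cutoff contribution where a variable lies outside $B_{\delta}$ by $O(\varepsilon^{(2N-\mu)/2})=O(\varepsilon^{N-\mu/2})$. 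The only cosmetic difference is bookkeeping: the paper restricts to $B_{\delta}\times B_{\delta}$ and estimates the two remainder pieces $\D$ and $\E$ with explicit kernel computations, while you use the pointwise product inequality and a single HLS bound with the tail $\int_{|y|\geq\delta}|U_{\varepsilon}|^{2^{\ast}}=O(\varepsilon^{N})$, which yields the same error order.
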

\begin{proof}
In fact, using the Hardy-Littlewood-Sobolev inequality, on one hand, we get
\begin{equation}\label{E6}
\aligned
\left(\int_{\R^N}\int_{\R^N}\frac{|u_{\varepsilon}(x)|^{2_{\mu}^{\ast}}|u_{\varepsilon}(y)|^{2_{\mu}^{\ast}}}
{|x-y|^{\mu}}dxdy\right)^{\frac{N-2}{2N-\mu}}
&\leq C(N,\mu)^{\frac{N-2}{2N-\mu}}|u_{\varepsilon}|_{2^{\ast}}^{2}\\
&=C(N,\mu)^{\frac{N-2}{2N-\mu}}\big(C(N,\mu)^{\frac{N-2}{2N-\mu}\cdot\frac{N}{2}}S_{H,L}^{\frac{N}{2}}+O(\varepsilon^{N})\big)^{\frac{N-2}{N}}\\
&=C(N,\mu)^{\frac{N-2}{2N-\mu}\cdot\frac{N}{2}}S_{H,L}^{\frac{N-2}{2}}+O(\varepsilon^{N-2}).
\endaligned
\end{equation}
On the other hand,
\begin{equation}\label{E7}
\aligned
\int_{\mathbb{R}^N}\int_{\mathbb{R}^N}&\frac{|u_{\varepsilon}(x)|^{2_{\mu}^{\ast}}|u_{\varepsilon}(y)|^{2_{\mu}^{\ast}}}
{|x-y|^{\mu}}dxdy\\
&\geq \int_{B_{\delta}}\int_{B_{\delta}}\frac{|u_{\varepsilon}(x)|^{2_{\mu}^{\ast}}|u_{\varepsilon}(y)|^{2_{\mu}^{\ast}}}{|x-y|^{\mu}}dxdy\\
&=\int_{\mathbb{R}^N}\int_{\mathbb{R}^N}\frac{|U_{\varepsilon}(x)|^{2_{\mu}^{\ast}}|U_{\varepsilon}(y)|^{2_{\mu}^{\ast}}}{|x-y|^{\mu}}dxdy
-2\int_{\mathbb{R}^N\setminus B_{\delta}}\int_{B_{\delta}}\frac{|U_{\varepsilon}(x)|^{2_{\mu}^{\ast}}|U_{\varepsilon}(y)|^{2_{\mu}^{\ast}}}{|x-y|^{\mu}}dxdy\\
&\hspace{7mm}-\int_{\mathbb{R}^N\setminus B_{\delta}}\int_{\mathbb{R}^N\setminus B_{\delta}}\frac{|U_{\varepsilon}(x)|^{2_{\mu}^{\ast}}|U_{\varepsilon}(y)|^{2_{\mu}^{\ast}}}{|x-y|^{\mu}}dxdy\\
&=C(N,\mu)^{\frac{N}{2}}S_{H,L}^{\frac{2N-\mu}{2}}-2\D-\E,
\endaligned
\end{equation}
where
$$
\D=\int_{\mathbb{R}^N\setminus B_{\delta}}\int_{B_{\delta}}\frac{|U_{\varepsilon}(x)|^{2_{\mu}^{\ast}}|U_{\varepsilon}(y)|^{2_{\mu}^{\ast}}}{|x-y|^{\mu}}dxdy,\ \
\E=\int_{\mathbb{R}^N\setminus B_{\delta}}\int_{\mathbb{R}^N\setminus B_{\delta}}\frac{|U_{\varepsilon}(x)|^{2_{\mu}^{\ast}}|U_{\varepsilon}(y)|^{2_{\mu}^{\ast}}}{|x-y|^{\mu}}dxdy.
$$
 By direct computation, we know
\begin{equation}\label{E8}
\aligned
\D&=\int_{\mathbb{R}^N\setminus B_{\delta}}\int_{B_{\delta}}\frac{|U_{\varepsilon}(x)|^{2_{\mu}^{\ast}}|U_{\varepsilon}(y)|^{2_{\mu}^{\ast}}}{|x-y|^{\mu}}dxdy\\
&=\varepsilon^{2N-\mu}[N(N-2)]^{\frac{2N-\mu}{2}}\int_{\mathbb{R}^N\setminus B_{\delta}}\int_{B_{\delta}}\frac{1}
{(\varepsilon^{2}+|x|^{2})^{\frac{2N-\mu}{2}}|x-y|^{\mu}(\varepsilon^{2}+|y|^{2})^{\frac{2N-\mu}{2}}}dxdy\\
&\leq O(\varepsilon^{2N-\mu})\left(\int_{\mathbb{R}^N\setminus B_{\delta}}\frac{1}
{(\varepsilon^{2}+|x|^{2})^{N}}dx\right)^{\frac{2N-\mu}{2N}}\left(\int_{B_{\delta}}\frac{1}{(\varepsilon^{2}+|y|^{2})^{N}}dy\right)^{\frac{2N-\mu}{2N}}\\
&\leq O(\varepsilon^{2N-\mu})\left(\int_{\mathbb{R}^N\setminus B_{\delta}}\frac{1}
{|x|^{2N}}dx\right)^{\frac{2N-\mu}{2N}}\left(\int_{0}^{\delta}\frac{r^{N-1}}{(\varepsilon^{2}+r^{2})^{N}}dr\right)^{\frac{2N-\mu}{2N}}\\
&\leq O(\varepsilon^{\frac{2N-\mu}{2}})
\endaligned
\end{equation}
and
\begin{equation}\label{E9}
\aligned
\E&=\int_{\mathbb{R}^N\setminus B_{\delta}}\int_{\mathbb{R}^N\setminus B_{\delta}}\frac{|U_{\varepsilon}(x)|^{2_{\mu}^{\ast}}|U_{\varepsilon}(y)|^{2_{\mu}^{\ast}}}{|x-y|^{\mu}}dxdy\\
&=\int_{\mathbb{R}^N\setminus B_{\delta}}\int_{\mathbb{R}^N\setminus B_{\delta}}\frac{\varepsilon^{\mu-2N}[N(N-2)]^{\frac{2N-\mu}{2}}}
{(1+|\frac{x}{\varepsilon}|^{2})^{\frac{2N-\mu}{2}}|x-y|^{\mu}(1+|\frac{y}{\varepsilon}|^{2})^{\frac{2N-\mu}{2}}}dxdy\\
&=\varepsilon^{2N-\mu}[N(N-2)]^{\frac{2N-\mu}{2}}\int_{\mathbb{R}^N\setminus B_{\delta}}\int_{\mathbb{R}^N\setminus B_{\delta}}\frac{1}
{(\varepsilon^{2}+|x|^{2})^{\frac{2N-\mu}{2}}|x-y|^{\mu}(\varepsilon^{2}+|y|^{2})^{\frac{2N-\mu}{2}}}dxdy\\
&\leq\varepsilon^{2N-\mu}[N(N-2)]^{\frac{2N-\mu}{2}}\int_{\mathbb{R}^N\setminus B_{\delta}}\int_{\mathbb{R}^N\setminus B_{\delta}}\frac{1}
{|x|^{2N-\mu}|x-y|^{\mu}|y|^{2N-\mu}}dxdy\\
&=O(\varepsilon^{2N-\mu}).\\
\endaligned
\end{equation}
It follows from \eqref{E7} to \eqref{E9}  that
\begin{equation}\label{E10}
\aligned
\int_{\mathbb{R}^N}\int_{\mathbb{R}^N}\frac{|u_{\varepsilon}(x)|^{2_{\mu}^{\ast}}|u_{\varepsilon}(y)|^{2_{\mu}^{\ast}}}
{|x-y|^{\mu}}dxdy
&\geq C(N,\mu)^{\frac{N}{2}}S_{H,L}^{\frac{2N-\mu}{2}}-O(\varepsilon^{\frac{2N-\mu}{2}})-O(\varepsilon^{2N-\mu})\\
&=C(N,\mu)^{\frac{N}{2}}S_{H,L}^{\frac{2N-\mu}{2}}-O(\varepsilon^{\frac{2N-\mu}{2}}).\\
\endaligned
\end{equation}
\end{proof}
We define the linear space
$$
\mathbb{G}_{\varepsilon}:=E^{-}\oplus\R u_{\varepsilon}=E^{-}\oplus\R u_{\varepsilon}^{+}
$$
and set
$$
m_{\varepsilon}:=\max_{u\in\mathbb{G}_{\varepsilon}, \|u\|_{KNL}=1}\left(\int_{\mathbb{R}^N}(|\nabla u|^{2}+V(x)|u|^{2})dx\right),
$$
where $u_{\varepsilon}$ is defined in \eqref{B17} and $\|u\|_{KNL}:=\left(\displaystyle\int_{\mathbb{R}^N}
\int_{\mathbb{R}^N}\frac{K(x)|u(x)|^{2_{\mu}^{\ast}}K(y)|u(y)|^{2_{\mu}^{\ast}}}
{|x-y|^{\mu}}dxdy\right)^{\frac{1}{2\cdot2_{\mu}^{\ast}}}$.

We may assume, without loss of generality, that $K(0)=\|K\|_{\infty}$ and $V(0)<0$. Moreover, $\delta$ in the definition of $u_{\varepsilon}(x)$ may be chosen so that $V(x)\leq -\xi$ for some $\xi>0$ and $|x|\leq \delta$.

\begin{lem}\label{VK7} If $\varepsilon>0$ is small enough, then $\sup_{\mathbb{G}_{\varepsilon}} J_{K} <c_K$, where $c_K$ is defined in Lemma \ref{VK6}. In particular, if $z_0 =u_{\varepsilon}^{+}$ with $\varepsilon$ small enough then $c^{\star}\leq \sup_{M} J_{K} <c_K$.
\end{lem}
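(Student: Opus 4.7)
The plan is to reduce the supremum of $J_K$ over $\mathbb{G}_\varepsilon$ to a one-parameter maximization in the direction $u_\varepsilon$, in the Brezis--Nirenberg spirit, and then exploit the test-function asymptotics \eqref{B19}--\eqref{B22} and Lemma~\ref{ECT} to push the resulting quantity strictly below $c_K$.

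First, I would pass from $J_K$ to an upper envelope. Since $F\ge 0$ by $(K_3)$ and $K>0$, we have $G(x,u)\ge K(x)|u|^{2_\mu^{*}}$, so
\[
J_K(u)\le\widetilde J(u):=\frac12\int(|\nabla u|^2+Vu^2)\,dx-\frac{1}{2\cdot 2_\mu^{*}}\int\!\!\int\frac{K(x)|u(x)|^{2_\mu^{*}}K(y)|u(y)|^{2_\mu^{*}}}{|x-y|^\mu}\,dx\,dy,
\]
and it suffices to estimate $\sup_{\mathbb{G}_\varepsilon}\widetilde J$. Writing $u=su_\varepsilon+w$ with $s\ge0$ and $w\in E^-$, the orthogonality of $E^\pm$ in the $V$-form gives
\[
\int(|\nabla u|^2+Vu^2)\,dx=s^2\int(|\nabla u_\varepsilon|^2+Vu_\varepsilon^2)\,dx-\|su_\varepsilon^-+w\|_V^2+s^2\|u_\varepsilon^-\|_V^2.
\]
For the convolution integral, I would apply the pointwise inequality $|a+b|^{2_\mu^{*}}\ge|a|^{2_\mu^{*}}-C(|a|^{2_\mu^{*}-1}|b|+|b|^{2_\mu^{*}})$ together with Proposition~\ref{HLS} to produce a finite sum of mixed integrals in $u_\varepsilon$ and $w$. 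Each of these is controlled by pairing the concentration of $u_\varepsilon$ against the enhanced integrability $|w|_q\le c\|w\|_V$ supplied by Lemma~\ref{VK2}. After absorbing the interaction terms into $-\tfrac12\|su_\varepsilon^-+w\|_V^2$ and restricting to a bounded set of $(s,w)$ via the coercivity argument of Lemma~\ref{VK3}(ii), one obtains
\[
\sup_{\mathbb{G}_\varepsilon}\widetilde J\le\sup_{s\ge0}\Bigl\{\tfrac{s^2}{2}A_\varepsilon-\tfrac{s^{2\cdot 2_\mu^{*}}}{2\cdot 2_\mu^{*}}B_\varepsilon\Bigr\}+o(\omega(\varepsilon)),
\]
where $A_\varepsilon:=\int(|\nabla u_\varepsilon|^2+Vu_\varepsilon^2)\,dx$, $B_\varepsilon:=\int\!\!\int\frac{K|u_\varepsilon|^{2_\mu^{*}}K|u_\varepsilon|^{2_\mu^{*}}}{|x-y|^\mu}\,dx\,dy$, and $\omega(\varepsilon)=\varepsilon^2$ for $N\ge5$, $\omega(\varepsilon)=\varepsilon^2|\ln\varepsilon|$ for $N=4$.

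Direct calculus gives $\sup_{s\ge0}\{\cdot\}=\frac{N+2-\mu}{2(2N-\mu)}A_\varepsilon^{(2N-\mu)/(N+2-\mu)}B_\varepsilon^{-(N-2)/(N+2-\mu)}$, so comparison with $c_K$ reduces everything to showing $A_\varepsilon^{2N-\mu}<|K|_\infty^{-(2N-4)}S_{H,L}^{2N-\mu}B_\varepsilon^{N-2}$. After translating so that $x_0=0$ and shrinking $\delta$ so that $V\le-\xi<0$ on $B_{2\delta}$, the asymptotics \eqref{B19}--\eqref{B22} yield $A_\varepsilon\le S^{N/2}-c_1\omega(\varepsilon)+o(\omega(\varepsilon))$. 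Similarly, Lemma~\ref{ECT}, the hypothesis $K(x)-|K|_\infty=o(|x|^2)$ near $0$, and an estimate of $\int|x|^2|u_\varepsilon|^{2_\mu^{*}}\,dx$ yield $B_\varepsilon\ge|K|_\infty^2 C(N,\mu)^{N/2}S_{H,L}^{(2N-\mu)/2}-o(\omega(\varepsilon))$. Using Lemma~\ref{ExFu} in the form $C(N,\mu)^{N/2}S_{H,L}^{(2N-\mu)/2}=C(N,\mu)S^{(2N-\mu)/2}$ together with $S_{H,L}=S/C(N,\mu)^{(N-2)/(2N-\mu)}$, the leading parts of both sides of the inequality collapse to $S^{N(2N-\mu)/2}$, and
\[
A_\varepsilon^{2N-\mu}-|K|_\infty^{-(2N-4)}S_{H,L}^{2N-\mu}B_\varepsilon^{N-2}\le-c_2\,\omega(\varepsilon)+o(\omega(\varepsilon))<0
\]
for small $\varepsilon$, because the error powers $\varepsilon^{N-2}$ and $\varepsilon^{(2N-\mu)/2}$ are both $o(\omega(\varepsilon))$ under $N\ge4$ and $0<\mu<4$ (the $N=4$ borderline is rescued by the logarithm in $\omega$). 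This yields $\sup_{\mathbb{G}_\varepsilon}J_K<c_K$; since $M\subset\mathbb{G}_\varepsilon$ when $z_0=u_\varepsilon^+$, Remark~\ref{VK4} then gives $c^\star\le\sup_M J_K<c_K$.

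The main obstacle is the reduction step above: controlling the interaction between the sharply concentrated bubble $u_\varepsilon$ and an arbitrary $w\in E^-$ inside the nonlocal convolution requires the strong integrability of elements of $E^-$ supplied by Lemma~\ref{VK2}. This is precisely the embedding that fails for $N=3$ (compare Remark~\ref{AR4}), and it is needed to force the cross terms to be of strictly lower order than the shift $\omega(\varepsilon)$ produced by $V(x_0)<0$; without it, the Brezis--Nirenberg gain would be absorbed by the error.
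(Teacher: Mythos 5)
Your overall strategy is the one the paper uses: dominate $J_K$ by the purely critical functional, reduce to a one--parameter maximization along the bubble direction, and let the asymptotics \eqref{B18}--\eqref{B22}, Lemma \ref{ECT}, the hypothesis $K(x)-K(x_0)=o(|x-x_0|^2)$ and $V(x_0)<0$ push the fiber maximum strictly below $c_K$; your computation of the fiber maximum and the cancellation of the leading terms $S^{N(2N-\mu)/2}$ are correct and coincide with \eqref{D15}--\eqref{D16}. The genuine gap is the reduction step, i.e.\ the claimed bound $\sup_{\mathbb{G}_\varepsilon}\widetilde J\le\sup_{s\ge0}\bigl\{\tfrac{s^2}{2}A_\varepsilon-\tfrac{s^{2\cdot2_\mu^*}}{2\cdot2_\mu^*}B_\varepsilon\bigr\}+o(\omega(\varepsilon))$. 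Because your pointwise inequality carries the term $|b|^{2_\mu^*}$, inserting it into the double integral forces you to control a cross term of the type $\int\!\!\int K(x)|su_\varepsilon(x)|^{2_\mu^*}K(y)|w(y)|^{2_\mu^*}|x-y|^{-\mu}\,dx\,dy$, which by Proposition \ref{HLS} is of size $s^{2_\mu^*}|u_\varepsilon|_{2^*}^{2_\mu^*}|w|_{2^*}^{2_\mu^*}\sim s^{2_\mu^*}\|w\|_V^{2_\mu^*}$ with an $\varepsilon$--independent constant ($|u_\varepsilon|_{2^*}$ does not decay, by \eqref{B18}). Since $2_\mu^*>2$, this superquadratic term can be absorbed into $-\tfrac12\|su_\varepsilon^-+w\|_V^2$ only when $\|w\|_V$ is small; on the part of your bounded set where $\|w\|_V$ is of order one it contributes an error of order one, not $o(\omega(\varepsilon))$, and the coercivity of Lemma \ref{VK3}(ii) (whose radius depends on $\varepsilon$ through $z_0$) does not remove it. A secondary omission: the term $s^2\|u_\varepsilon^-\|_V^2$ in your quadratic-form identity is never estimated; one needs $\|u_\varepsilon^-\|_V^2=o(\omega(\varepsilon))$, which does hold via the duality estimate $|E(0)u_\varepsilon|_2\le c_0|u_\varepsilon|_1=O(\varepsilon^{(N-2)/2})$ (Lemma \ref{VK1} and \eqref{B191}) together with Lemma \ref{VK2}, but must be said, and for $N=4$ it is only $O(\varepsilon^2)$, rescued by the logarithm.

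The paper's bookkeeping avoids both issues. It computes $\max_{t\ge0}I_K(tu)$ exactly (formula \eqref{D15}), so the lemma reduces to showing $m_\varepsilon<|K|_\infty^{-\frac{2N-4}{2N-\mu}}S_{H,L}$ for the maximum of the quadratic form on $\{u\in\mathbb{G}_\varepsilon:\|u\|_{KNL}=1\}$, with no bounded-set truncation; and on the nonlocal side it uses the convexity of $u\mapsto\|u\|_{KNL}^{2\cdot2_\mu^*}$ (estimate \eqref{D18}), which produces only the cross term linear in $u^-$, of size $O(\varepsilon^{\frac{N-2}{2}})|u^-|_2$ by Lemma \ref{VK1}, \eqref{B18} and \eqref{D17}; this is then absorbed by the genuinely negative term $-C_5|u^-|_2^2$ at the harmless order $O(\varepsilon^{N-2})$ as in \eqref{D20}. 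If you replace your pointwise expansion by this functional convexity inequality (so that no $|w|^{2_\mu^*}$ cross term appears) and estimate $\|u_\varepsilon^-\|_V$ as above --- or simply work with $u=u^-+su_\varepsilon$ and bound the bilinear cross term by Lemma \ref{VK1} and \eqref{B191} --- your argument closes, and the asymptotic comparison you wrote at the end is then exactly the paper's.
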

\begin{proof}
We introduce the functional
$$
I_{K}(u)=\frac{1}{2}\int_{\mathbb{R}^N}(|\nabla u|^{2}+V|u|^{2})dx-\frac{1}{2\cdot2_{\mu}^{\ast}}\int_{\mathbb{R}^N}
\int_{\mathbb{R}^N}\frac{K(x)|u(x)|^{2_{\mu}^{\ast}}K(y)|u(y)|^{2_{\mu}^{\ast}}}
{|x-y|^{\mu}}dxdy.
$$
Since $I_{K}(u)\geq J_{K}(u)$ for all $u$, it suffices to show that $\sup_{\mathbb{G}_{\varepsilon}} I_{K} <c_K$. By a direct computation, for all $u\in E\backslash\{0\}$, we have
\begin{equation}\label{D15}
\max_{t\geq0}I_{K}(tu)=\frac{N+2-\mu}{4N-2\mu}\left(\frac{\displaystyle\int_{\mathbb{R}^N}(|\nabla u|^{2}+V|u|^{2})dx}
{\|u\|_{KNL}^{2}}\right)^{\frac{2N-\mu}{N-\mu+2}}
\end{equation}
whenever the integral in the numerator above is positive, and the maximum is 0
otherwise. It is easy to see from \eqref{D15} that if
\begin{equation}\label{D16}
m_{\varepsilon}<|K|_{\infty}^{-\frac{2N-4}{2N-\mu}}S_{H,L}
\end{equation}
then $\sup_{\mathbb{G}_{\varepsilon}} J_{K}\leq \sup_{\mathbb{G}_{\varepsilon}} I_{K}<c_K$.
So it remains to prove that \eqref{D16} is satisfied for all small $\varepsilon>0$.

Suppose $u\in\mathbb{G}_{\varepsilon}$ with $\|u\|_{KNL}=1$ and write $u=u^{-}+su_{\varepsilon}$.
By direct computation, we have
\begin{equation}\label{D17}
\aligned
|u_{\varepsilon}|_{\frac{2N(N-\mu+2)}{(2N-\mu)(N-2)}}
^{\frac{N-\mu+2}{N-2}}
&\leq O(\varepsilon^{\frac{N-2}{2}}).
\endaligned
\end{equation}
By convexity, Lemma \ref{VK1}, the Hardy-Littlewood-Sobolev inequality and the H\"{o}lder inequality, we obtain
\begin{equation}\label{D18}
\aligned
1&=\int_{\mathbb{R}^N}\int_{\mathbb{R}^N}\frac{K(x)|u(x)|^{2_{\mu}^{\ast}}K(y)|u(y)|^{2_{\mu}^{\ast}}}
{|x-y|^{\mu}}dxdy\\
&=\int_{\mathbb{R}^N}\int_{\mathbb{R}^N}\frac{K(x)|u^{-}(x)+su_{\varepsilon}(x)|^{2_{\mu}^{\ast}}
K(y)|u^{-}(y)+su_{\varepsilon}(y)|^{2_{\mu}^{\ast}}}{|x-y|^{\mu}}dxdy\\
&\geq\int_{\mathbb{R}^N}\int_{\mathbb{R}^N}\frac{K(x)|su_{\varepsilon}(x)|^{2_{\mu}^{\ast}}K(y)|su_{\varepsilon}(y)|^{2_{\mu}^{\ast}}}
{|x-y|^{\mu}}dxdy+2\cdot2_{\mu}^{\ast}\int_{\mathbb{R}^N}\int_{\mathbb{R}^N}\frac{K(x)|su_{\varepsilon}(x)|^{2_{\mu}^{\ast}-1}u^{-}(x)
K(y)|su_{\varepsilon}(y)|^{2_{\mu}^{\ast}}}{|x-y|^{\mu}}dxdy\\
&\geq s^{2\cdot2_{\mu}^{\ast}}\int_{\mathbb{R}^N}\int_{\mathbb{R}^N}\frac{K(x)|u_{\varepsilon}(x)|^{2_{\mu}^{\ast}}K(y)|u_{\varepsilon}(y)|^{2_{\mu}^{\ast}}}
{|x-y|^{\mu}}dxdy-2\cdot2_{\mu}^{\ast}s^{2\cdot2_{\mu}^{\ast}-1}|K|_{\infty}^{2}|u_{\varepsilon}|_{2^{\ast}}^{2_{\mu}^{\ast}}\left(\int_{\mathbb{R}^N}
|u_{\varepsilon}|^{2^{\ast}\frac{N-\mu+2}{2N-\mu}}|u^{-}|^{\frac{2N}{2N-\mu}}dx\right)^{\frac{2N-\mu}{2N}}\\
&\geq s^{2\cdot2_{\mu}^{\ast}}\int_{\mathbb{R}^N}\int_{\mathbb{R}^N}\frac{K(x)|u_{\varepsilon}(x)|^{2_{\mu}^{\ast}}K(y)|u_{\varepsilon}(y)|^{2_{\mu}^{\ast}}}
{|x-y|^{\mu}}dxdy-C_{1}s^{2\cdot2_{\mu}^{\ast}-1}
|u_{\varepsilon}|_{2^{\ast}\frac{N-\mu+2}{2N-\mu}}^{\frac{N-\mu+2}{N-2}}|u^{-}|_{1,\infty}\\
&\geq s^{2\cdot2_{\mu}^{\ast}}\int_{\mathbb{R}^N}\int_{\mathbb{R}^N}\frac{K(x)|u_{\varepsilon}(x)|^{2_{\mu}^{\ast}}K(y)|u_{\varepsilon}(y)|^{2_{\mu}^{\ast}}}
{|x-y|^{\mu}}dxdy-C_{2}s^{2\cdot2_{\mu}^{\ast}-1}O(\varepsilon^{\frac{N-2}{2}})|u^{-}|_{2},
\endaligned
\end{equation}
where \eqref{B18} and \eqref{D17} are used. Since \eqref{D18} implies that $s<C_{3}$  for some constant $C_{3}>0$, we deduce from \eqref{D18} that
$$
\int_{\mathbb{R}^N}\int_{\mathbb{R}^N}\frac{K(x)|su_{\varepsilon}(x)|^{2_{\mu}^{\ast}}K(y)|su_{\varepsilon}(y)|^{2_{\mu}^{\ast}}}
{|x-y|^{\mu}}dxdy\leq 1+O(\varepsilon^{\frac{N-2}{2}})|u^{-}|_{2}.
$$
Let
\begin{equation}\label{D19}
A_{\varepsilon}:=\frac{\displaystyle\int_{\mathbb{R}^N}(|\nabla u_{\varepsilon}|^{2}+V|u_{\varepsilon}|^{2})dx}
{\|u_{\varepsilon}\|_{KNL}^{2}}.
\end{equation}
It follows from Lemma \ref{VK1} that
$$
\int_{\mathbb{R}^N}(\nabla u^-\nabla u_\vr +V(x)u^-u_\vr) dx\leq C_4\Big(\int_{\mathbb{R}^N}|\nabla u_{\varepsilon}|dx+\int_{\mathbb{R}^N}| u_{\varepsilon}|dx\Big)|u^{-}|_{2},
$$
then we know
$$\aligned
m_{\varepsilon}&\leq -\|u^{-}\|_{V}^{2}+A_{\varepsilon}\left(\int_{\mathbb{R}^N}\int_{\mathbb{R}^N}\frac{K(x)|su_{\varepsilon}(x)|^{2_{\mu}^{\ast}}K(y)|su_{\varepsilon}(y)|^{2_{\mu}^{\ast}}}
{|x-y|^{\mu}}dxdy\right)^{\frac{N-2}{2N-\mu}}+C_4\Big(\int_{\mathbb{R}^N}|\nabla u_{\varepsilon}|dx+\int_{\mathbb{R}^N}| u_{\varepsilon}|dx\Big)|u^{-}|_{2}\\
&\leq -C_{5}|u^{-}|_{2}^{2}+A_{\varepsilon}\left(1+|u^{-}|_{2}O(\varepsilon^{\frac{N-2}{2}})\right)^{\frac{N-2}{2N-\mu}}
+C_4\Big(\int_{\mathbb{R}^N}|\nabla u_{\varepsilon}|dx+\int_{\mathbb{R}^N}| u_{\varepsilon}|dx\Big)|u^{-}|_{2}\\
&\leq -C_{5}|u^{-}|_{2}^{2}+A_{\varepsilon}\left(1+|u^{-}|_{2}O(\varepsilon^{\frac{N-2}{2}})\right)
+O(\varepsilon^{\frac{N-2}{2}})|u^{-}|_{2}.
\endaligned
$$
Noting that $K(x)-K(0)=o(|x|^{2})$ as $x\rightarrow 0$, we obtain
$$\aligned
\|u_{\varepsilon}\|_{KNL}^{22_{\mu}^{\ast}}&=|K|_{\infty}^{2}\int_{\mathbb{R}^N}\int_{\mathbb{R}^N}
\frac{|u_{\varepsilon}(x)|^{2_{\mu}^{\ast}}|u_{\varepsilon}(y)|^{2_{\mu}^{\ast}}}
{|x-y|^{\mu}}dxdy+2|K|_{\infty}\int_{\mathbb{R}^N}\int_{\mathbb{R}^N}\frac{(K(x)-K(0))|u_{\varepsilon}(x)|^{2_{\mu}^{\ast}}
|u_{\varepsilon}(y)|^{2_{\mu}^{\ast}}}
{|x-y|^{\mu}}dxdy\\
&\hspace{7mm}+\int_{\mathbb{R}^N}\int_{\mathbb{R}^N}\frac{(K(x)-K(0))|u_{\varepsilon}(x)|^{2_{\mu}^{\ast}}
(K(y)-K(0))|u_{\varepsilon}(y)|^{2_{\mu}^{\ast}}}
{|x-y|^{\mu}}dxdy\\
&\geq|K|_{\infty}^{2}C(N,\mu)^{\frac{N}{2}}S_{H,L}^{\frac{2N-\mu}{2}}+o(\varepsilon^{2}).
\endaligned$$

By the proof of Proposition 4.2 in \cite{CSa}, we know
$$
\int_{\mathbb{R}^N}V(x)|u_{\varepsilon}|^{2}dx\leq \left\{\begin{array}{l}
\displaystyle -d\varepsilon^{2}|\ln\varepsilon| \hspace{5.64mm} \mbox{if}\hspace{2.14mm} N=4,\\
\displaystyle -d\varepsilon^{2} \hspace{12.64mm} \mbox{if} \hspace{2.14mm}N\geq5.\\
\end{array}
\right.
$$
If $N\geq5$, we have
$$\aligned
m_{\varepsilon}&\leq-C_{5}|u^{-}|_{2}^{2}+\frac{\displaystyle\int_{\mathbb{R}^N}(|\nabla u_{\varepsilon}|^{2}+V|u_{\varepsilon}|^{2})dx}
{\|u_{\varepsilon}\|_{KNL}^{2}}\left(1+|u^{-}|_{2}O(\varepsilon^{\frac{N-2}{2}})\right)
+O(\varepsilon^{\frac{N-2}{2}})|u^{-}|_{2}\\
&\leq-C_{5}|u^{-}|_{2}^{2}+\frac{C(N,\mu)^{\frac{N-2}{2N-\mu}\cdot\frac{N}{2}}S_{H,L}^{\frac{N}{2}}
+O(\varepsilon^{N-2})- d\varepsilon^{2}}
{|K|_{\infty}^{\frac{2(N-2)}{2N-\mu}}\left(C(N,\mu)^{\frac{N}{2}}S_{H,L}^{\frac{2N-\mu}{2}}+o(\varepsilon^{2})\right)^{\frac{N-2}{2N-\mu}}}
\left(1+|u^{-}|_{2}O(\varepsilon^{\frac{N-2}{2}})\right)
+O(\varepsilon^{\frac{N-2}{2}})|u^{-}|_{2}\\
&\leq |K|_{\infty}^{-\frac{2N-4}{2N-\mu}}S_{H,L}- d\varepsilon^{2}+O(\varepsilon^{N-2})
-C_{5}|u^{-}|_{2}^{2}+O(\varepsilon^{\frac{N-2}{2}})|u^{-}|_{2}
\endaligned
$$
for $\varepsilon>0$ sufficiently small. Since
\begin{equation}\label{D20}
-C_{5}|u^{-}|_{2}^{2}+O(\varepsilon^{\frac{N-2}{2}})|u^{-}|_{2}
\leq O(\varepsilon^{N-2}),
\end{equation}
we know
$$
m_{\varepsilon}\leq |K|_{\infty}^{-\frac{2N-4}{2N-\mu}}S_{H,L}- d\varepsilon^{2}+O(\varepsilon^{N-2})<|K|_{\infty}^{-\frac{2N-4}{2N-\mu}}S_{H,L}
$$
for $\varepsilon>0$ sufficiently small.

If $N=4$,  by \eqref{D20}, we have
$$\aligned
m_{\varepsilon}&\leq-\|u^{-}\|_{V}^{2}+\frac{\displaystyle\int_{\mathbb{R}^N}(|\nabla u_{\varepsilon}|^{2}+V|u_{\varepsilon}|^{2})dx}
{\|u_{\varepsilon}\|_{KNL}^{2}}\left(1+|u^{-}|_{2}O(\varepsilon)\right)
+O(\varepsilon)|u^{-}|_{2}\\
&\leq-C_{5}|u^{-}|_{2}^{2}+\frac{1}{|K|_{\infty}^{\frac{4}{8-\mu}}}\frac{C(4,\mu)^{\frac{4}{8-\mu}}S_{H,L}^{2}+O(\varepsilon^{2})- d\varepsilon^{2}|\ln\varepsilon| }{\left(C(4,\mu)^{2}S_{H,L}^{\frac{8-\mu}{2}}+o(\varepsilon^{2})\right)^{\frac{2}{8-\mu}}}\left(1+|u^{-}|_{2}O(\varepsilon)\right)
+O(\varepsilon)|u^{-}|_{2}\\
&\leq |K|_{\infty}^{-\frac{4}{8-\mu}}S_{H,L}-d\varepsilon^{2}|\ln\varepsilon|
+O(\varepsilon^{2})-C_{5}|u^{-}|_{2}^{2}+O(\varepsilon)|u^{-}|_{2}\\
&\leq |K|_{\infty}^{-\frac{4}{8-\mu}}S_{H,L}- d\varepsilon^{2}|\ln\varepsilon|+O(\varepsilon^{2})\\
&<|K|_{\infty}^{-\frac{4}{8-\mu}}S_{H,L}
\endaligned
$$
for $\varepsilon>0$ sufficiently small. The result then follows.
\end{proof}

\noindent
{\bf Proof of Theorem \ref{EXS3}.}
By Remark \ref{VK4} and Lemma \ref{VK7}, there
exists a $(PS)_{c^{\star}}$ sequence $\{u_{n}\}$ of the functional $J_{K}$ with
$c^{\star}<\frac{N-\mu+2}{4N-2\mu}|K|_{\infty}^{-\frac{2N-4}{N-\mu+2}}S_{H,L}^{\frac{2N-\mu}{N-\mu+2}}$.  Applying Lemma \ref{VK6}, we know that the sequence $\{u_{n}\}$ cannot be vanishing. And so, there exist $r,\eta>0$ and a sequence $\{y_{n}\}\in \R^{N}$ such that
$$
\lim_{n\rightarrow\infty}\sup\int_{B(y_{n},r)}u_{n}^{2}dx\geq\eta,
$$
where $B(x,r)$ denotes the open ball centered at $x$ with radius $r$.
We may assume $y_{n}\in \Z^{N}$ by taking a larger $r$ if necessary. Let $\widetilde{u}_{n}(x):=u_{n}(x+y_{n})$. Since $J_{K}$ is invariant with respect to the translation of
$x$ by elements of $\Z^{N}$, $\|\widetilde{u}_{n}\|_{V}=\|u_{n}\|_{V}$
and $\|J_{K}'(\widetilde{u}_{n})\|_{H^{-1}(\mathbb{R}^N)}=\|J_{K}'(u_{n})\|_{H^{-1}(\mathbb{R}^N)}$. Hence $\widetilde{u}_{n}\rightharpoonup\widetilde{u}$, after passing to a subsequence, such that $J_{K}'(\widetilde{u})=0$.
Since $\lim_{n\rightarrow\infty}\sup\displaystyle\int_{B(0,r)}\widetilde{u}_{n}^{2}dx\geq\eta$, we know $\widetilde{u}\neq0$. Thus $\widetilde{u}$ is a nontrivial solution of equation \eqref{CCE2}. $\hfill{} \Box$

\vspace{1cm}
\noindent {\bf Acknowledgements.} \
The authors would like to thank  the anonymous referee
for his/her useful comments and suggestions which help to improve the presentation of the paper greatly.

\end{document}